\setlist[enumerate]{leftmargin=.5in}
\setlist[itemize]{leftmargin=.5in}
\crefname{hypothesis}{Hypothesis}{Hypotheses}
\title{Computing Multiplicative Relations between Roots of a Polynomial\thanks{November 28, 2019.
\funding{This work was supported partly by NSFC under grants 61732001
and 61532019.}}}
\author{%Dianne Doe\thanks{Imagination Corp., Chicago, IL 
%  (\email{ddoe@imag.com}, \url{http://www.imag.com/\string~ddoe/}).}
Tao Zheng\thanks{School of Mathematical Sciences, Peking University, Beijing, China (\email{xd07121019@126.com}).}
%\and Jane E. Smith\footnotemark[3]
}
\begin{document}
\renewcommand\arraystretch{1.2}
\maketitle
%However, these algorithms are not designed particularly for the case where the input algebraic numbers are exactly all the roots of a certain polynomial (called  \emph{the Galois case}), with which we are concerned. The numerical results show that any input of the Galois case, with slightly larger polynomial degree, can be intractable by the tools implementing these two algorithms.
% REQUIRED
\begin{abstract}
%There are two state-of-the-art algorithms computing the exponent lattice of arbitrary non-zero algebraic numbers, though no particular algorithm is invented to deal with \emph{the Galois case}, \emph{i.e.}, the case where the input algebraic numbers are exactly all the roots of a certain polynomial. But a particular algorithm seems to be necessary for the Galois case, since those two algorithms become less efficient when the polynomial degree in the Galois case becomes slightly larger. In this paper,
%Multiplicative relations between the roots of a polynomial in $\mathbb{Q}[x]$ have drawn much attention in the field of arithmetic and algebra, while the corresponding problem of computing the exponent lattice of these roots is interesting to researchers in many other fields. 
%A new class of polynomials in $\mathbb{Q}[x]$ is characterized so that each polynomial in this class has only trivial multiplicative relations between its roots. This result improves many other ones in the literature.
Multiplicative relations between the roots of a polynomial in $\mathbb{Q}[x]$ have drawn much attention in the field of arithmetic and algebra, while the problem of computing these relations is interesting to researchers in many other fields. In this paper, a sufficient condition is given for a polynomial $f\in\mathbb{Q}[x]$ to have only trivial multiplicative relations between its roots, which is a generalization of those sufficient conditions proposed in [C. J. Smyth,  \emph{J. Number Theory}, 23 (1986), pp. 243--254], [G. Baron \emph{et al}., \emph{J. Algebra}, 177 (1995), pp. 827--846] and [J. D. Dixon, \emph{Acta Arith.} 82 (1997), pp. 293--302]. Based on the new condition, a subset $E\subset\mathbb{Q}[x]$ is defined and proved to be genetic (i.e., the set $\mathbb{Q}[x]\backslash E$ is very small). We develop an algorithm deciding whether a given polynomial $f\in\mathbb{Q}[x]$ is in $E$ and returning a basis of the lattice consisting of the multiplicative relations between the roots of $f$ whenever $f\in E$. The numerical experiments show that the new algorithm is very efficient for the polynomials in $E$. A large number of polynomials with much higher degrees, which were intractable before, can be handled successfully with the algorithm. %Moreover, since $E$ is generic, an arbitrary polynomial in $\mathbb{Q}[x]$ may be handled by the algorithm with high probability, which is also verified via our experiments.
% Smyth(1986), Baron et al.(1995) and Dixon(1997)
%[C. J. Smyth,  \emph{J. Number. Theory.}, 23 (1986), pp. 243-254]
%[G. Baron et al., \emph{J. Algebra.}, 177 (1995), pp. 827-846]
%[J. D. Dixon, \emph{Acta. Arith.} 82 (1997), pp. 293-302]
\end{abstract}

% REQUIRED
\begin{keywords}
exponent lattice, multiplicative relation, polynomial roots, basis, Galois group
\end{keywords}

% REQUIRED
\begin{AMS}
03G10, 11D61, 11Y50, 12E30, 12F05, 12F10, 12Y05, 20B05%68Q25, 68R10, 68U05
\end{AMS}

\section{Introduction}
For any vector $\alpha=(\alpha_1,\ldots,\alpha_n)^T\in(\overline{\mathbb{Q}}^*)^n$ of non-zero algebraic numbers,  \emph{the exponent lattice} of $\alpha$ refers to the set of integer vectors \[\mathcal{R}_\alpha=\big\{v\in\mathbb{Z}^n\;|\;\alpha_1^{v(1)}\cdots\alpha_{n}^{v(n)}=1\big\}.\] Any vector in $\mathcal{R}_\alpha$ is called \emph{a multiplicative relation} between those $\alpha_i$, $i=1,\ldots,n$. %and a non-zero multiplicative relation is called \emph{a dependent vector} of $\alpha$.
For a univariate polynomial $f\in\mathbb{Q}[x]$, if $f(0)\neq0$ and $\beta_1,\beta_2,\ldots,\beta_n$ are all the complex roots of $f$ listed with multiplicity, then the exponent lattice $\mathcal{R}_\beta$ of the roots $\beta=(\beta_1,\ldots,\beta_n)^T$ is denoted simply by $\mathcal{R}_f$. For convenience we also define $\mathcal{R}_\alpha^{\mathbb{Q}}=\big\{v\in\mathbb{Z}^n\;|\;\alpha_1^{v(1)}\cdots\alpha_{n}^{v(n)}\in\mathbb{Q}\big\}$ and $\mathcal{R}_f^{\mathbb{Q}}=\mathcal{R}_\beta^{\mathbb{Q}}$. Moreover, we denote the Galois group of $f$ by $G_f=\text{Gal}(\mathcal{F}_f/\mathbb{Q})$, where $\mathcal{F}_f$ is the splitting field of $f$ over $\mathbb{Q}$. A root $\beta_i$ of $f$ is called \emph{a root of rational} if there is a positive integer $k$ so that $\beta_i^k\in\mathbb{Q}$ % We write for short ``non-ROR$(f)$'' the assertion ``none of the roots of  $f$ is a root of rational''.
while a lattice $\mathcal{L}\subset\mathbb{Z}^n$ is called \emph{trivial} if every vector $v\in\mathcal{L}$ satisfies $v(1)=v(2)=\cdots=v(n)$.

Multiplicative relations between polynomial roots have intrigued many researchers. There are two problems lying in the core of the study of multiplicative relations between polynomial roots: (i) Is there a sufficient and necessary condition, which is easy to check, for the lattice $\mathcal{R}_f$ to be trivial?  Are there some sufficient conditions implying that $\mathcal{R}_f$ is trivial? (ii) How to develop an algorithm computing the lattice $\mathcal{R}_f$ for a given $f\in\mathbb{Q}[x]$ with $f(0)\neq0$?

To the best of our knowledge, no sufficient and necessary condition has been given for the lattice $\mathcal{R}_f$ to be trivial. However, there are many sufficient conditions in the literature. We suppose in the rest of this paragraph that $f$ is an irreducible polynomial in $\mathbb{Q}[x]$ with no root being a root of rational, whenever it is mentioned. Then by \citep[Lemma 1]{sn}, the condition  ``$G_f$ is isomorphic to the symmetric group of order $\deg(f)$'' implies that $\mathcal{R}_f$ is trivial, while by either \citep[Theorem 3]{2tran} or \citep[Theorem 1]{dixon}, $\mathcal{R}_f$ is trivial if $G_f$ is $2$-transitive on the set of the roots of $f$. Another condition implying that $\mathcal{R}_{f}$ is trivial is given by \citep[Theorem 1]{pcase}, which requires that $\deg(f)=p$ is an odd prime and for all $b,c\in\mathbb{Q}^*$, $f\neq bx^p-c$.

There are also quite a few polynomials $f\in\mathbb{Q}[x]$ with$f(0)\neq0$, such that $\mathcal{R}_f$ is non-trivial. For these polynomials an algorithm is desired to compute the lattice $\mathcal{R}_f$. Setting $U\subset\overline{\mathbb{Q}}^*$ to be the set of the roots of unity and $\beta=(\beta_1,\ldots,\beta_n)^T$ the roots of $f$ listed with multiplicity, we define \emph{the saturated lattice} of $\mathcal{R}_f$ by
\[
\begin{array}{*{20}{lll}}
\mathcal{R}_f^U&=&\{v\in\mathbb{Z}^n\;|\;\beta_1^{v(1)}\cdots\beta_n^{v(n)}\in U\}\\
&=&\{v\in\mathbb{Z}^n\;|\;\exists \lambda \in\mathbb{Z}^*, \lambda v\in\mathcal{R}_f\}.
\end{array}
\]
Then its \emph{lattice ideal} in the ring $\overline{\mathbb{Q}}[x_1,\ldots,x_n]=\overline{\mathbb{Q}}[X]$ refers to the ideal
\[
\begin{array}{*{20}{lll}}
I(\mathcal{R}_f^U)&=&\big\langle\{X^{v_+}-\beta^vX^{v_-}\;|\;v\in\mathcal{R}_f^U\}\big\rangle,%\\
%&=&\big\langle\{X^{v_+}-X^{v_-}\;|\;v\in\mathcal{R}_f\}\big\rangle,
\end{array}
\]
where $v_+$ is in $\mathbb{Z}^n$ with $v_+(i)=\max\{v(i),0\}$ for $i=1,2,\dots,n$, $v_-=v_+-v$, $X^v=x_1^{v(1)}\cdots x_n^{v(n)}$ and $\beta^v=\beta_1^{v(1)}\cdots \beta_n^{v(n)}$ for any $v\in\mathbb{Z}^n$. An algorithm computing the ideal $I(\mathcal{R}_f^U)$ (which is closely related to the lattice $\mathcal{R}_f$) for a given irreducible polynomial $f\in\mathbb{Q}[x]$ is claimed to exist in Theorem 8 of \cite{structure}. This theorem is based on Theorem 6 of \cite{structure}, which indicates that if $f\in\mathbb{Q}[x]$ is irreducible and $B_f$ is the reduced Gr$\ddot{\text{o}}$bner basis of $I(\mathcal{R}_f^U)$ with respect to a lexicographic monomial order, then for any binomial in $B_f$, either it has a constant term or its terms share the same number of variables. Unfortunately, the following example due to A. Schinzel \cite[p.$\;$1]{pcase} suggests that Theorem 6 of \cite{structure} may not hold for all the irreducible polynomials.
\begin{example}\label{Schinzel}
Set
$f=x^6-2x^4-6x^3-2x^2+1$ with roots $\beta_1=0.44576\cdots$, $\beta_2=2.24333\cdots$ and
\[
\begin{array}{*{20}{llllllllll}}
\beta_3&=&-&(0.92999\cdots)&-&(1.17407\cdots)\sqrt{-1},&&\beta_4&=&\bar{\beta}_3,\\
\beta_5&=&-&(0.41455\cdots)&-&(0.52336\cdots)\sqrt{-1},&&\beta_6&=&\bar{\beta}_5.
\end{array}
\]
Computation with Mathematica shows $f$ is irreducible in $\mathbb{Q}[x]$. %in $\mathbb{Q}[x]$ and $\beta_1\beta_3=\beta_5$. Hence $v=(1,0,1,0,-1,0)^T\in\mathcal{R}_f$, but $N_+(v)\neq N_-(v)$ and $N_+(v)N_-(v)\neq0$. Moreover,
A  basis of $\mathcal{R}_f$ given by the algorithm {\tt FindRelations} described in \cite{ge} and \cite[$\S\,7.3$]{Kauers} is as follows
\begin{equation}\label{basis}
\big\{
(0, 0, -1, 0, 0, -1)^T, (-1, 0, -1, -1, 0, 0)^T, (0, 0, 0, 1, 1,
  0)^T, (-1, -1, 0, 0, 0, 0)^T
\big\}.
\end{equation}
%Denoting by $U$ the set of the roots of unity in $\overline{\mathbb{Q}}^*$, we can define the lattice
%\[
%\begin{array}{*{20}{lll}}
%\mathcal{R}_f^U&=&\{v\in\mathbb{Z}^6\;|\;\beta_1^{v(1)}\cdots\beta_6^{v(6)}\in U\}\\
%&=&\{v\in\mathbb{Z}^6\;|\;\exists \lambda \in\mathbb{Z}^*, \lambda v\in\mathcal{R}_f\}.
%\end{array}
%\]
From the basis (\ref{basis}) of $\mathcal{R}_f$ and the definition of $\mathcal{R}_f^U$, one obtains easily that $\mathcal{R}_f^U=\mathcal{R}_f$. Then
\[
\begin{array}{*{20}{lll}}
I(\mathcal{R}_f^U)&=&\big\langle\{X^{v_+}-\beta^vX^{v_-}\;|\;v\in\mathcal{R}_f^U\}\big\rangle\\
&=&\big\langle\{X^{v_+}-X^{v_-}\;|\;v\in\mathcal{R}_f\}\big\rangle.
\end{array}
\]
By the methods in \cite{markov}, the reduced Gr$\ddot{\text{o}}$bner basis of $I(\mathcal{R}_f^U)$ with respect to the lexicographic monomial order with $x_1\prec\cdots\prec x_6$ is
\[
B_f=\{-1 + x_1 x_2, -x_2 + x_3 x_4, -x_1 x_3 + x_5, -x_1 x_4 + x_6\}.
\]
The last three binomials in $B_f$ do not have the good property claimed by Theorem 6 of \cite{structure}: each of them contains no constant terms but consists of two terms that do not share a same number of variables.
\end{example}

To the best of our knowledge, there are no algorithms designed particularly to compute the lattice $\mathcal{R}_f$ for $f\in\mathbb{Q}[x]$, except for the relevant one mentioned in Theorem 8 of \cite{structure} above. But many problems in other areas can be reduced to the problem of computing a basis of the lattice $\mathcal{R}_f$, or contain it as a subproblem. For instance, based on
computing the exponent lattice of the eigenvalues of an arbitrary invertible matrix, an algorithm was proposed to compute
the Zariski closure of a finitely generated group of invertible matrices in \cite{zariski}. Additionally, an algorithm containing a subroutine which computes the exponent lattice of the roots of the  characteristic polynomial of a linear recurrence sequence was provided to compute the ideal of algebraic relations among C-finite sequences in \cite{cfinite}. What's more, a
class of loop invariants called \emph{L-invariants} introduced in \cite{polyinv} for linear loops are closely related to the exponent lattice of polynomial roots: each non-zero vector in the lattice corresponds to an L-invariant. Interestingly, a part of the invariant ideal of the linear loop is exactly the lattice ideal defined by the exponent lattice of those polynomial roots.

The algorithms proposed in \cite{ge, Kauers} (named {\tt FindRelations}) and \cite{issac} (named {\tt GetBasis}) are designed to compute the exponent lattice of  arbitrarily given non-zero algebraic numbers. Neither of them takes into account the case where the input algebraic numbers are exactly all the roots of a polynomial $f\in\mathbb{Q}[x]$ (which can be called \emph{the Galois case}). We will see in $\S\,$\ref{subsection:bottleneck} that, for the Galois case, these two algorithms become less efficient when the degree of $f$ becomes slightly larger. This indicates the necessity of developing a particular algorithm for the Galois case.
%\begin{example}
%Set $f=3 x^5+ 10 x^4 + 8 x^3- 8 x^2  - 4 x-6 $ to be a randomly generated polynomial of degree $5$ with each coefficient an integer randomly picked between $-10$ and $10$. One can prove with Mathematica that $f$ is irreducible. Thus by \cite{pcase} Theorem 1, $\mathcal{R}_f$ is trivial. Denote the roots of $f$ by $\beta=(\beta_1,\ldots,\beta_5)^T$ and set $k_1,\ldots,k_5$ to be unknown integers, then $\beta_1^{k_1}\cdots\beta_5^{k_5}=1$ implies that $k_1=\cdots=k_5=k$ for some $k$. Hence $1=(\beta_1\cdots\beta_5)^k=2^k$. So $k=0$, which means that $\mathcal{R}_f=\{\mathbf{0}\}$.
%
%If the degree of $f$ were not a prime, we would use \cite{sn} Lemma 1. With Maple, one proves fast  that $G_f\cong S_5$. And by \cite{ror} Algorithm 5, one observes that none of the roots of $f$ is a root of rational (this step can also be done efficiently). In this case we also concludes that $\mathcal{R}_f$ is trivial and equals to $\{\mathbf{0}\}$.
%
%In contrast, we compute $\mathcal{R}_\beta$ by the algorithms {\tt FindRelations} and {\tt GetBasis} and find that neither of them returns a result within two hours (see Table \ref{compare}, the polynomial $f$ here is exactly the polynomial $g^{(1)}$ therein).
%\end{example}

One of the main results in this paper is the assertion that if $f$ is irreducible with no root being a root of rational and $G_f$ is $2$-homogeneous, then $\mathcal{R}_f$ is trivial (Theorem \ref{thm:main}). This is proved in Section \ref{sec:main} by taking advantage of the properties of the permutation groups that are $2$-homogeneous but not $2$-transitive. This result generalizes the conditions given in \cite {sn}, \cite{2tran} and \cite{dixon} mentioned before, since both the symmetric group of order ${\deg(f)}$ (when $\deg(f)\geq2$) and any $2$-transitive group are $2$-homogeneous. According to Theorem \ref{thm:main}, we design Algorithm \ref{fastbasis} to compute the lattice $\mathcal{R}_f$ for any $f$ in a subset $E\subset\mathbb{Q}[x]$ (defined in Definition \ref{def:easy}). To be precise, for any input $f\in\mathbb{Q}[x]$, if $f\in E$ then Algorithm \ref{fastbasis} returns a basis of $\mathcal{R}_f$, otherwise it returns a symbol ``\textcolor{red}{F}''. For this, an efficiently checkable criterion (Corollary \ref{dis}) is proposed in Section \ref{deciding} to decide whether a given polynomial is in the set $E$. In Section \ref{E}, we prove that $E$ is a generic subset of $\mathbb{Q}[x]$ (Proposition \ref{truncation}). This indicates that the polynomials in the set $E$, which can be handled by Algorithm \ref{fastbasis}, constitute a large proportion of  the polynomials in $\mathbb{Q}[x]$. This theoretical result is consistent with the numerical results shown in Table \ref{FBresults} in Section \ref{algexp}. From the numerical results we can also see that Algorithm \ref{fastbasis} is effective and efficient for the polynomials in the generic subset $E$ of $\mathbb{Q}[x]$ and a large number of polynomials which were intractable before can be dealt with.

The next section is devoted to studying the rank of a special kind of matrices, which lies in the core of the proof of the main theorem (Theorem \ref{thm:main}).
\section{The Rank of  A Fractal Circulant Matrix}\label{fcm}
Suppose that $m\geq2$ is an integer. Set $b_i$, $i\in\mathbb{Z}/m\mathbb{Z}$, to be some complex numbers (resp., some complex matrices of the same dimension), then the circulant matrix (resp., the block-circulant matrix) generated by $b_i$ is given by
\[
C(b_0,\ldots,b_{m-1})=\big(b_{j-i}\big)_{i,j\in\mathbb{Z}/m\mathbb{Z}}=
\left({\begin{array}{*{20}{lll}}
b_0&b_1&\cdots&b_{m-1}\\
b_{m-1}&b_0&\cdots&b_{m-2}\\
\vdots&\vdots&\ddots&\vdots\\
b_1&b_2&\cdots&b_0
\end{array}} \right).
\]
\begin{definition}
Suppose that $m,d$ are positive integers and that $a_v$, with $v\in(\mathbb{Z}/m\mathbb{Z})^d$, are $m^d$ complex numbers.
% For $j$ from $d,d-1$ to $2$, suppose that $a_{i_1,\ldots,i_j}$ are already defined for all $i_1,\ldots,i_j\in\mathbb{Z}/m\mathbb{Z}$, then we define
%\[
%a_{i_1,\ldots,i_{j-1}}=C(a_{i_1,\ldots,i_{j-1},0},a_{i_1,\ldots,i_{j-1},1},\ldots,a_{i_1,\ldots,i_{j-1},m-1})
%\] for all $i_1,\ldots,i_{j-1}\in\mathbb{Z}/m\mathbb{Z}$.
\emph{A fractal circulant matrix} $M\in\mathbb{C}^{m^d\times m^d}$ of {\emph{order}} $m$ and \emph{depth} $d$ generated by the numbers $a_v$ is defined by
\begin{equation}\label{entry}
M=\big(a_{v-u}\big)_{u,v\in(\mathbb{Z}/m\mathbb{Z})^d}.
\end{equation}
\end{definition}
We define an order $\lessdot$ on the set $\mathbb{Z}/m\mathbb{Z}$ such that $0\lessdot1\lessdot\;\cdots\;\lessdot m-1$. Also, a lexicographic order $\prec$ is defined on the set $(\mathbb{Z}/m\mathbb{Z})^d$ as follows:  for any $v,u\in(\mathbb{Z}/m\mathbb{Z})^d$, we say $v\prec u$ iff the minimal $i\in\{1,\ldots,d\}$ such that $v(i)\neq u(i)$ satisfies $v(i)\lessdot u(i)$. In the following of this paper, the rows and columns of any fractal circulant matrix $M$ will be arranged increasingly in accordance with the order $\prec$.

\begin{example}
Set $m=3$, $d=2$ and $(a_{00},a_{01},a_{02},a_{10},a_{11},a_{12},a_{20},a_{21},a_{22})$\,$=$\,$(4, -1,2,1,2,$\\$3,-3,0,5)$, then
%\[
%a_{0}=\left({\begin{array}{*{20}{rr}}
%1&2&3\\
%3&1&2\\
%2&3&1
%\end{array}}\right),
%a_{1}=\left({\begin{array}{*{20}{rr}}
%4&-1&2\\
%2&4&-1\\
%-1&2&4
%\end{array}}\right),
%a_{2}=\left({\begin{array}{*{20}{rr}}
%-3&0&5\\
%5&-3&0\\
%0&5&-3
%\end{array}}\right).
%\]
the fractal circulant matrix we obtain here is
\[
M=\left({\begin{array}{*{20}{rrr;{2pt/2pt}rrr;{2pt/2pt}rrr}}
4&-1&2&\textcolor{blue}{1}&\textcolor{blue}{2}&\textcolor{blue}{3}&\textcolor{red}{-3}&\textcolor{red}{0}&\textcolor{red}{5}\\
2&4&-1&\textcolor{blue}{3}&\textcolor{blue}{1}&\textcolor{blue}{2}&\textcolor{red}{5}&\textcolor{red}{-3}&\textcolor{red}{0}\\
-1&2&4&\textcolor{blue}{2}&\textcolor{blue}{3}&\textcolor{blue}{1}&\textcolor{red}{0}&\textcolor{red}{5}&\textcolor{red}{-3}\\
\hdashline[2pt/2pt]
\textcolor{red}{-3}&\textcolor{red}{0}&\textcolor{red}{5}&4&-1&2&\textcolor{blue}{1}&\textcolor{blue}{2}&\textcolor{blue}{3}\\
\textcolor{red}{5}&\textcolor{red}{-3}&\textcolor{red}{0}&2&4&-1&\textcolor{blue}{3}&\textcolor{blue}{1}&\textcolor{blue}{2}\\
\textcolor{red}{0}&\textcolor{red}{5}&\textcolor{red}{-3}&-1&2&4&\textcolor{blue}{2}&\textcolor{blue}{3}&\textcolor{blue}{1}\\
\hdashline[2pt/2pt]
\textcolor{blue}{1}&\textcolor{blue}{2}&\textcolor{blue}{3}&\textcolor{red}{-3}&\textcolor{red}{0}&\textcolor{red}{5}&4&-1&2\\
\textcolor{blue}{3}&\textcolor{blue}{1}&\textcolor{blue}{2}&\textcolor{red}{5}&\textcolor{red}{-3}&\textcolor{red}{0}&2&4&-1\\
\textcolor{blue}{2}&\textcolor{blue}{3}&\textcolor{blue}{1}&\textcolor{red}{0}&\textcolor{red}{5}&\textcolor{red}{-3}&-1&2&4
\end{array}}\right),
\]
with rows and columns arranged increasingly: $00\prec01\prec02\prec10\prec11\prec12\prec20\prec21\prec22.$
\end{example}

For a square matrix $A$ of dimension $n\times n$, we define $\text{corank}(A)=n-\text{rank}(A)$. The following theorem characterizes the corank of a fractal circulant matrix.

\begin{theorem}\label{rank}
Set $M$ to be a fractal circulant matrix of order $m$ and depth $d$, generated by some complex numbers $a_v$, $v\in(\mathbb{Z}/m\mathbb{Z})^d$. Define $\zeta_m=e^{{2\pi\sqrt{-1}}/{m}}$ and set
\[
\hat{a}_u=\sum_{v\in(\mathbb{Z}/m\mathbb{Z})^d}a_v\zeta_m^{u\cdot v}
\]
to be the discrete Fourier transformed of $a_v$ defined for all $u\in(\mathbb{Z}/m\mathbb{Z})^d$. Then
\begin{equation}\label{corank}
\text{\emph{corank}}(M)=\big|\{u\in(\mathbb{Z}/m\mathbb{Z})^d\;|\;\hat{a}_{u}=0\}\big|.
\end{equation}
\end{theorem}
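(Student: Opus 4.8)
The plan is to recognize $M$ as the matrix of a convolution (group-circulant) operator on the finite abelian group $G=(\mathbb{Z}/m\mathbb{Z})^d$ and to diagonalize it using the characters of $G$. Concretely, index the standard basis of $\mathbb{C}^{m^d}$ by the elements of $G$, ordered by $\prec$ exactly as the rows and columns of $M$ are, so that $M$ acts by $(Mx)_u=\sum_{v\in G}a_{v-u}x_v$. For each $u_0\in G$ introduce the character vector $\chi_{u_0}=\big(\zeta_m^{u_0\cdot v}\big)_{v\in G}$, where $u_0\cdot v=\sum_{i=1}^d u_0(i)v(i)$ is the pairing already appearing in the definition of $\hat a_u$.

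The key step is a one-line verification that each $\chi_{u_0}$ is an eigenvector of $M$ with eigenvalue $\hat a_{u_0}$. Computing the $w$-th coordinate of $M\chi_{u_0}$ and substituting $t=v-w$ (a bijection of $G$) gives
\[
(M\chi_{u_0})_w=\sum_{v\in G}a_{v-w}\,\zeta_m^{u_0\cdot v}=\zeta_m^{u_0\cdot w}\sum_{t\in G}a_t\,\zeta_m^{u_0\cdot t}=\hat a_{u_0}\,(\chi_{u_0})_w,
\]
so $M\chi_{u_0}=\hat a_{u_0}\chi_{u_0}$. Thus the $m^d$ numbers $\hat a_{u_0}$, $u_0\in G$, are exactly the eigenvalues of $M$ counted with multiplicity, provided the $\chi_{u_0}$ form a basis.

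Next I would check that $\{\chi_{u_0}:u_0\in G\}$ is a basis of $\mathbb{C}^{m^d}$. The matrix $F=\big(\zeta_m^{u\cdot v}\big)_{u,v\in G}$ whose column indexed by $u_0$ is $\chi_{u_0}$ is the $d$-fold Kronecker product of the ordinary $m\times m$ matrix $\big(\zeta_m^{ij}\big)_{i,j\in\mathbb{Z}/m\mathbb{Z}}$; the latter is a Vandermonde matrix in the distinct nodes $1,\zeta_m,\ldots,\zeta_m^{m-1}$, hence invertible, and a Kronecker product of invertible matrices is invertible. Consequently $F^{-1}MF=\diag\big(\hat a_{u_0}\big)_{u_0\in G}$, so $M$ is diagonalizable with spectrum $\{\hat a_{u_0}\}_{u_0\in G}$. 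Since the rank of a diagonalizable matrix equals the number of its nonzero eigenvalues counted with multiplicity, the corank equals the number of vanishing eigenvalues, which is precisely $\big|\{u\in(\mathbb{Z}/m\mathbb{Z})^d\;|\;\hat{a}_{u}=0\}\big|$, establishing (\ref{corank}).

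The computation itself is short; the only point demanding care is the index bookkeeping — confirming that $t=v-w$ is a well-defined bijection of $(\mathbb{Z}/m\mathbb{Z})^d$ so that the defining sum for $\hat a_{u_0}$ reappears unchanged, and that the lexicographic layout $\prec$ of $M$ matches the indexing of $F$ so that conjugation by $F$ literally diagonalizes $M$. An equivalent route, better adapted to the nested ``fractal'' structure, is induction on the depth $d$: a depth-$d$ matrix is a block-circulant $C(B_0,\ldots,B_{m-1})$ whose blocks $B_k$ are depth-$(d-1)$ fractal circulant matrices, block-circulants are block-diagonalized by $F_m\otimes I$ into blocks $\sum_k\zeta_m^{jk}B_k$ that are again depth-$(d-1)$ fractal circulant, and one applies the inductive hypothesis after identifying the partial transforms with the $\hat a_u$. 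I expect the character argument above to give the cleaner write-up.
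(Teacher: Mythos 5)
Your proof is correct, and it takes a genuinely different route from the paper's. The paper argues by induction on the depth $d$: the base case $d=1$ is the classical circulant case, quoted from a theorem of Schinzel, and the inductive step writes $M$ as a block-circulant matrix $C(A_{00},A_{01},\ldots,A_{0,m-1})$ whose blocks are fractal circulant of depth $d-1$, invokes a cited block-diagonalization result to replace $M$ up to similarity by the blocks $s_q=\sum_{j}\zeta_m^{qj}A_{0j}$, observes that each $s_q$ is again fractal circulant of depth $d-1$ generated by the partial transforms $\sum_j\zeta_m^{qj}a_{(j,v)}$, and sums the coranks --- this is exactly the alternative you sketch in your closing paragraph. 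Your main argument instead diagonalizes $M$ in one stroke using the characters $\chi_{u_0}=\big(\zeta_m^{u_0\cdot v}\big)_{v}$ of the group $(\mathbb{Z}/m\mathbb{Z})^d$: the eigenvector computation $M\chi_{u_0}=\hat a_{u_0}\chi_{u_0}$ is right (the substitution $t=v-w$ is a bijection of the group), the invertibility of $F$ via the Kronecker--Vandermonde factorization is right, and the corank of a matrix similar to a diagonal one is indeed the number of zero diagonal entries; the ordering issue you flag is harmless since your indexing of $F$ matches the paper's lexicographic layout of $M$, and rank is unaffected by consistent reindexing in any case. What your approach buys: it is self-contained (the paper outsources both the $d=1$ case and the block-circulant similarity to the literature), it yields strictly more --- an explicit eigenbasis and the full spectrum $\{\hat a_u\}$, not just the corank --- and it generalizes verbatim to convolution matrices over any finite abelian group. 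What the paper's induction buys: it works directly with the recursive block structure that motivates the name ``fractal circulant,'' requires no bookkeeping about Kronecker-product indexing, and, by leaning on standard cited facts, keeps the write-up short.
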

\begin{proof}
The proof is inductive on the depth $d$. When $d=1$, $M$ is a circulant matrix. The conclusion follows from a theorem due to A. Schinzel in \citep[p. 5]{pcase}.

Suppose that the conclusion holds for $d=\ell$. In the following, we consider the case where $d=\ell+1$.

Define $A_{ij}\;(i,j\in\mathbb{Z}/m\mathbb{Z})$ to be the submatrix of $M$ whose rows are indexed by the set $\{v\in(\mathbb{Z}/m\mathbb{Z})^d\;|\;v(1)=i\}$ and whose columns are indexed by the set $\{v\in(\mathbb{Z}/m\mathbb{Z})^d\;|\;v(1)=j\}$. Then we have
\[
M=\left(
\begin{array}{*{20}{cccc}}
A_{00}&A_{01}&\cdots&A_{0,p-1}\\
A_{10}&A_{11}&\cdots&A_{1,p-1}\\
\vdots&\vdots&\ddots&\vdots\\
A_{p-1,0}&A_{p-1,1}&\cdots&A_{p-1,p-1}
\end{array}
\right).
\]
By comparing the entries of the matrices $A_{ij}$ and $A_{0,j-i}$ according to (\ref{entry}), one observes that
\[A_{ij}=A_{0,j-i}.\]This means that $M=C(A_{00},A_{01},\ldots,A_{0,p-1})$ is a block-circulant matrix. Noting that each $A_{0j}$ is an $m^{d-1}\times m^{d-1}$ square matrix, one observes from \citep[p.$\;$809]{bcm} that $M$ is similar to the matrix
\[\left(\begin{array}{*{20}{cccl}}
s_0&&&\\
&s_1&&\\
&&\ddots&\\
&&&s_{m-1}
\end{array}\right)
\]
where \[s_{q}=\sum_{j=0}^{m-1}\zeta_m^{qj}A_{0j}\] for each $q\in\mathbb{Z}/m\mathbb{Z}$. Since each $A_{0j}$ is a fractal circulant matrix of order $m$ and depth $d-1=\ell$, so is each $s_q$. Moreover, $A_{0j}$ is generated by the numbers $a_{{(j,v)}}$ with $v\in(\mathbb{Z}/m\mathbb{Z})^\ell$, while $s_q$ is generated by the numbers $b^{(q)}_{v}=\sum_{j=0}^{m-1}\zeta_m^{qj}a_{{(j,v)}}$. Thus by the inductive assumption that the theorem holds for depth $\ell$, we have
\[
\text{corank}(s_q)=\big|\{u\in(\mathbb{Z}/m\mathbb{Z})^\ell\;|\sum_{v\in(\mathbb{Z}/m\mathbb{Z})^\ell}b^{(q)}_{v}\zeta_m^{u\cdot v}=0\}\big|\quad\quad\quad
\]
\[
\quad\quad\quad\quad\quad\quad=\big|\{u\in(\mathbb{Z}/m\mathbb{Z})^\ell\;|\sum_{(j,v)\in(\mathbb{Z}/m\mathbb{Z})^{1+\ell}}a_{(j,v)}\zeta_m^{qj+u\cdot v}=0\}\big|
\]
\[
\;=\big|\{u\in(\mathbb{Z}/m\mathbb{Z})^\ell\;|\hat{a}_{(q,u)}=0\}\big|.\quad\quad\quad
\]
Thus \[\quad\quad\quad\text{corank}(M)=\sum_{q=0}^{m-1}\text{corank}(s_q)=\big|\{(q,u)\in(\mathbb{Z}/m\mathbb{Z})^{1+\ell}\;|\hat{a}_{(q,u)}=0\}\big|.\;\,\]
Hence the theorem holds for $d=\ell+1$ and we are done.
\end{proof}
\begin{definition}
For a fractal circulant matrix $M$ of prime order $p$ and depth $d$ generated by some rational numbers $a_v,v\in\mathbb{F}_p^d$, \emph{a slicing vector} $u\in \mathbb{F}_p^d\backslash\{\mathbf{0}\}$ of $M$ is one such that
\[
\sum_{v\in P_0} a_v=\sum_{v\in P_1} a_v=\cdots=\sum_{v\in P_{p-1}} a_v,
\]where each
\begin{equation}\label{planes}
P_j = \big\{v\in\mathbb{F}_p^d\;|\;u\cdot v=j\big\},j\in\mathbb{F}_p.
\end{equation}
We denote by $V(M)$ the set of all slicing vectors of $M$ and define the projective equivalence relation $\sim$ on the set $\mathbb{F}_p^d\backslash\{\mathbf{0}\}$ such that for $v_1,v_2\in \mathbb{F}_p^d\backslash\{\mathbf{0}\}$, $v_1\sim v_2$ iff $v_1= jv_2$ for some $j\in\mathbb{F}_p^*$. Then the number $|V(M)/$$\sim$$|$ is defined to be \emph{the slicing number} of the fractal circulant matrix $M$.
\end{definition}
\begin{lemma}\label{equcon}
Suppose that $M$ is a fractal circulant matrix of prime order $p$ and depth $d$ generated by some rational numbers $a_v,v\in\mathbb{F}_p^d$. Then, for any $u\in\mathbb{F}_p^d\backslash\{\mathbf{0}\}$, $\hat{a}_u=0$ iff $u$ is a slicing vector of $M$.
\end{lemma}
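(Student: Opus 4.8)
The plan is to express the Fourier coefficient $\hat{a}_u$ as a rational linear combination of the $p$-th roots of unity whose coefficients are precisely the slice-sums in the definition of a slicing vector, and then to invoke the irreducibility of the cyclotomic polynomial $\Phi_p$.

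First I would record that, because $u\neq\mathbf{0}$, the map $v\mapsto u\cdot v$ is a surjection from $\mathbb{F}_p^d$ onto $\mathbb{F}_p$; hence the fibres $P_0,\ldots,P_{p-1}$ defined in (\ref{planes}) partition $\mathbb{F}_p^d$. Setting $S_j=\sum_{v\in P_j}a_v$ and grouping the terms of $\hat{a}_u=\sum_{v}a_v\zeta_p^{u\cdot v}$ according to the common value $u\cdot v=j$ on each fibre, I obtain $\hat{a}_u=\sum_{j=0}^{p-1}S_j\zeta_p^{\,j}$. Since the $a_v$ are rational, each $S_j$ lies in $\mathbb{Q}$, and by definition $u$ is a slicing vector exactly when $S_0=S_1=\cdots=S_{p-1}$.

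The easy implication is then immediate: if the $S_j$ share a common value $S$, then $\hat{a}_u=S\sum_{j=0}^{p-1}\zeta_p^{\,j}=0$, because the $p$ distinct $p$-th roots of unity sum to zero. For the converse, suppose $\hat{a}_u=0$. Then $\zeta_p$ is a root of the polynomial $g(x)=\sum_{j=0}^{p-1}S_jx^j\in\mathbb{Q}[x]$, which has degree at most $p-1$. Here I would use that the minimal polynomial of $\zeta_p$ over $\mathbb{Q}$ is $\Phi_p(x)=1+x+\cdots+x^{p-1}$, of degree exactly $p-1$; consequently $\Phi_p\mid g$, and comparing degrees forces $g=c\,\Phi_p$ for some $c\in\mathbb{Q}$. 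Reading off coefficients gives $S_0=S_1=\cdots=S_{p-1}=c$, so $u$ is a slicing vector.

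The argument is short, and the only delicate point, hence the step I would treat most carefully, is the converse, where the rationality of the slice-sums $S_j$ is indispensable: it is exactly the fact that $g\in\mathbb{Q}[x]$ has degree below $p$ that lets the irreducibility of $\Phi_p$ collapse all the coefficients to a single value. Over $\mathbb{C}$ this conclusion would fail, so the hypothesis that $M$ is generated by rational numbers cannot be dropped.
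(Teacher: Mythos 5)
Your proof is correct and follows essentially the same route as the paper's: group the sum $\hat{a}_u=\sum_{j=0}^{p-1}S_j\zeta_p^{\,j}$ over the fibres $P_j$, then use that the minimal polynomial of $\zeta_p$ over $\mathbb{Q}$ is $1+x+\cdots+x^{p-1}$ to force all slice-sums to coincide. Your explicit degree-comparison step ($\Phi_p\mid g$, hence $g=c\,\Phi_p$) simply spells out a detail the paper leaves implicit.
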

\begin{proof}
``If'': Suppose that $u$ is a slicing vector of $M$. Since $u\neq\mathbf{0}$, each $P_j\neq\emptyset$ in (\ref{planes}). Thus
\[
\begin{array}{*{20}{rcl}}
\vspace{1mm}
\hat{a}_{u}&=&\sum_{v\in\mathbb{F}_p^d}a_v\zeta_p^{u\cdot v}\\\vspace{2mm}
&=&\sum_{j=0}^{p-1}\sum_{v\in P_j}a_{v}\zeta_p^{u\cdot v}\\\vspace{2mm}
&=&\sum_{j=0}^{p-1}\sum_{v\in P_j}a_{v}\zeta_p^j\\\vspace{2mm}
&=&\big(\sum_{v\in P_j}a_{v}\big)\big(\sum_{j=0}^{p-1}\zeta_p^j\big)\\\vspace{2mm}
&=&0.
\end{array}
\]

``Only If'': Set $u\neq\mathbf{0}$ to be a vector such that $\hat{a}_{u}=0$. We define $P_j$ as in (\ref{planes}) and denote
\[
W_j=\sum_{v\in P_j}a_{v}.
\]
Then we have
\[
\begin{array}{*{20}{rcl}}\vspace{2mm}
0&=&\sum_{v\in\mathbb{F}_p^d}a_{v}\zeta_p^{u\cdot v}\\\vspace{2mm}
&=&\sum_{j=0}^{p-1}\sum_{v\in P_j}a_{v}\zeta_p^{u\cdot v}\\\vspace{2mm}
&=&\sum_{j=0}^{p-1}\sum_{v\in P_j}a_{v}\zeta_p^j\\\vspace{2mm}
&=&\sum_{j=0}^{p-1}W_j\zeta_p^j.
\end{array}
\]
This implies that the polynomial $\sum_{j=0}^{p-1}W_jx^j\in\mathbb{Q}[x]$, which is of degree at most $p-1$, vanishes at the point $x=\zeta_p$. On the other hand, the minimal polynomial of $\zeta_p$ over the field $\mathbb{Q}$ is $1+x+\cdots+ x^{p-1}$. Hence one concludes that $W_0=W_1=\cdots=W_{p-1}$, which indicates that the vector $v$ is a slicing vector of $M$.
\end{proof}
\begin{theorem}\label{whatrank}
Set $M$ to be a fractal circulant matrix of prime order $p$ and depth $d$ generated by some rational numbers $a_{v},v\in\mathbb{F}_p^d$. Suppose that \[W=\sum_{v\in\mathbb{F}_p^d}a_{v},\] then the slicing number $s(M)$ of $M$ satisfies $0\leq s(M)\leq\frac{p^d-1}{p-1}$. Moreover,
\begin{equation}\label{slicing}
\text{\emph{corank}}(M)=
\left\{
\begin{array}{*{20}{l}}
(p-1)s(M), &\text{ if }W\neq0,\\
(p-1)s(M)+1, &\text{ if }W=0.
\end{array}\right.
\end{equation}
\end{theorem}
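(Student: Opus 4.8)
The plan is to compute $\text{corank}(M)$ via Theorem \ref{rank}, which tells us that $\text{corank}(M) = \big|\{u \in \mathbb{F}_p^d : \hat{a}_u = 0\}\big|$, and then to split this count according to whether $u = \mathbf{0}$ or $u \neq \mathbf{0}$. For the zero vector, observe directly that $\hat{a}_{\mathbf{0}} = \sum_{v \in \mathbb{F}_p^d} a_v \zeta_p^{0} = W$, so $\mathbf{0}$ contributes to the count precisely when $W = 0$; this single vector accounts for the discrepancy of $1$ between the two cases of (\ref{slicing}). For the nonzero vectors, Lemma \ref{equcon} identifies $\{u \neq \mathbf{0} : \hat{a}_u = 0\}$ with the set $V(M)$ of slicing vectors, so the whole theorem reduces to establishing that $|V(M)| = (p-1)\,s(M)$ together with the stated bound on $s(M)$.

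The key step is to show that $V(M)$ is a union of $\sim$-equivalence classes, i.e. that if $u$ is a slicing vector then so is $ju$ for every $j \in \mathbb{F}_p^*$. I would verify this by noting that the hyperplanes attached to $ju$ are merely a reindexing of those attached to $u$: writing $P_k^{(ju)} = \{v : (ju)\cdot v = k\} = \{v : u \cdot v = j^{-1}k\} = P_{j^{-1}k}^{(u)}$, one sees that as $k$ runs over $\mathbb{F}_p$ the index $j^{-1}k$ also runs bijectively over $\mathbb{F}_p$. Hence the family of block sums $\big\{\sum_{v \in P_k^{(ju)}} a_v\big\}_k$ is the same collection of numbers as $\big\{\sum_{v \in P_j^{(u)}} a_v\big\}_j$, so equality of all block sums is preserved under scaling and $ju \in V(M)$.

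Since $p$ is prime and $u \neq \mathbf{0}$, the scalar multiples $ju$, $j \in \mathbb{F}_p^*$, are $p-1$ distinct nonzero vectors, so $\mathbb{F}_p^*$ acts freely on $\mathbb{F}_p^d \setminus \{\mathbf{0}\}$ and every $\sim$-class has exactly $p-1$ elements. Because $V(M)$ is a union of such classes, I obtain $|V(M)| = (p-1)\,|V(M)/\!\sim| = (p-1)\,s(M)$. Combining this with the contribution of $\mathbf{0}$ computed above yields formula (\ref{slicing}) in both cases. The bound $0 \leq s(M) \leq \frac{p^d-1}{p-1}$ then follows immediately, since $V(M)/\!\sim$ is a subset of the full set of $\sim$-classes of $\mathbb{F}_p^d \setminus \{\mathbf{0}\}$, whose total number is $\frac{p^d-1}{p-1}$ (the number of points of the projective space $\mathbb{P}^{d-1}(\mathbb{F}_p)$).

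The main obstacle, though a mild one, is the projective invariance of the slicing property in the second paragraph; once that is in hand, the remainder is bookkeeping. I would take modest care to confirm that each nonzero slicing vector genuinely yields $p-1$ distinct multiples, so that no equivalence class is over- or undercounted, and to keep the role of the exceptional vector $\mathbf{0}$ separate throughout, as it is the sole source of the $+1$ term.
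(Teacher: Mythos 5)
Your proposal is correct and takes essentially the same route as the paper's proof: both invoke Theorem~\ref{rank} and Lemma~\ref{equcon}, split the count of vanishing Fourier coefficients into the contribution of $\mathbf{0}$ (which equals $W$) versus the nonzero vectors, and use the $\mathbb{F}_p^*$-invariance of the slicing property to conclude $|V(M)|=(p-1)s(M)$. The only difference is one of detail: you explicitly verify the invariance via the reindexing $P_k^{(ju)}=P_{j^{-1}k}^{(u)}$ and check that each $\sim$-class has exactly $p-1$ elements, steps the paper compresses into ``by the definition of a slicing vector.''
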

\begin{proof}
Since $\big|\big(\mathbb{F}_p^d\backslash\{\mathbf{0}\}\big)/$$\sim$$\big|=\frac{p^d-1}{p-1}$ and $V(M)\subset\mathbb{F}_p^d\backslash\{\mathbf{0}\}$, $0\leq s(M)\leq\frac{p^d-1}{p-1}$ follows from the definition of $s(M)$. Noting that if $v_1\sim v_2$ in $\mathbb{F}_p^d\backslash\{\mathbf{0}\}$, then $v_1\in V(M)$ iff $v_2\in V(M)$ by the definition of a slicing vector. Thus $|V(M)|=(p-1)s(M)$. By Lemma \ref{equcon}, the numbers of those vectors $v\in \mathbb{F}_p^d\backslash\{\mathbf{0}\}$ such that $\hat{a}_{v}=0$ is $(p-1)s(M)$. On the other hand, $\hat{a}_{\mathbf{0}}=W$. Hence the conclusion follows from Theorem \ref{rank}.
\end{proof}
\section{The Main Theorem and Its Proof}\label{sec:main}
Suppose that $f\in\mathbb{Q}[x]$ is without multiple roots. Its Galois group $G_f$ is regarded as a permutation group operating on the set $\mathfrak{R}$ of the roots of $f$. Then $G_f$ is said to be \emph{2-homogeneous} if $\deg(f)\geq2$ and for any two subsets $\{\beta_i,\beta_j\}$ and $\{\beta_{i'},\beta_{j'}\}$ of $\mathfrak{R}$, both of cardinality two, there is an element $\sigma\in G_f$ so that
$\{\beta_{i'},\beta_{j'}\}=\big\{\sigma(\beta_i),\sigma(\beta_j)\big\}$.
\begin{definition}
If $\mathbb{F}_q$ is a Galois field with $q$ a prime power, then \em{the semi-linear group} operating on $\mathbb{F}_q$ is defined to be
\[A\varGamma L(1,q)=\big\{\eta:\mathbb{F}_q\rightarrow\mathbb{F}_q,\nu\mapsto a\nu^\sigma+b\;|\;\sigma\in \text{Aut}(\mathbb{F}_q),a\in\mathbb{F}_q^*,b\in\mathbb{F}_q\big\}.\]
\end{definition}
\begin{theorem}\label{thm:main}
Set $f(x)\in\mathbb{Q}[x]$ to be a univariate polynomial without multiple roots, so that one of its roots is not a root of rational. If the Galois group $G_f$, regarded as a permutation group operating on the set of all the complex roots of $f$, is $2$-homogeneous, then $\mathcal{R}_f^\mathbb{Q}$ is trivial.
\end{theorem}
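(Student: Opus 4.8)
The plan is to show that $\mathcal{R}_f^\mathbb{Q}$ is trivial by a contradiction argument that reduces the problem to the rank (equivalently corank) of a fractal circulant matrix, so that the machinery of Section \ref{fcm} applies. Let $\beta=(\beta_1,\ldots,\beta_n)^T$ be the roots of $f$, and suppose toward a contradiction that there is a vector $w\in\mathcal{R}_f^\mathbb{Q}$ that is \emph{not} constant, i.e.\ $\beta^w=\beta_1^{w(1)}\cdots\beta_n^{w(n)}\in\mathbb{Q}^*$ but $w(i)\neq w(j)$ for some $i,j$. The first step is to exploit the Galois action: since $\beta^w\in\mathbb{Q}$, for every $\sigma\in G_f$ we have $\sigma(\beta^w)=\beta^w$, which translates into a whole orbit of nontrivial relations indexed by the permutations $\sigma$ acting on the index set of the roots. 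Passing from $\mathcal{R}_f^\mathbb{Q}$ to the saturated/unit information, one reduces to studying, for each pair of roots, the exponent of $\beta_i/\beta_j$ appearing across the Galois orbit; the $2$-homogeneity is precisely the hypothesis that makes $G_f$ act transitively on unordered pairs of roots, so all these pairwise data are ``the same'' up to the group action.

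Next I would invoke the structure of $2$-homogeneous but not $2$-transitive groups. The key classical fact (the reason the theorem is stated for $2$-homogeneous rather than just $2$-transitive groups, and where the hard input lies) is that a $2$-homogeneous permutation group that fails to be $2$-transitive must have degree $n=p^d$ a prime power and must be contained in the affine semi-linear group $A\varGamma L(1,q)$ acting on $\mathbb{F}_q$ with $q=p^d$, and moreover such groups exist only in the ``odd'' regime where the group acts on the $\binom{n}{2}$ pairs with a single orbit but on ordered pairs with two orbits. I would use this classification to set up coordinates: identify the roots with the elements of $\mathbb{F}_p^d\cong\mathbb{F}_q$, so that the translation subgroup acts as the additive group $(\mathbb{Z}/p\mathbb{Z})^d$. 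Under this identification, the matrix recording the exponents $w$ transported around by the translation part of $G_f$ becomes exactly a fractal circulant matrix $M$ of order $p$ and depth $d$, with rational generating entries $a_v$ built from the coordinates of $w$.

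The crux is then a rank/corank computation. A nontrivial relation $w$ would force the corresponding fractal circulant matrix $M$ to have corank larger than what is geometrically allowed, and here I would apply Theorem \ref{whatrank}: the corank of $M$ is governed by its slicing number $s(M)$ via $\mathrm{corank}(M)=(p-1)s(M)$ or $(p-1)s(M)+1$. A slicing vector $u$ corresponds to a direction along which the generating data are constant on the affine hyperplanes $P_j$, and I would argue that the $2$-homogeneity (the single-orbit-on-pairs condition) forces any such slicing structure to be compatible only with the trivial (constant) relation. Concretely, the multiplicative relation being Galois-invariant and the group containing all of $A\varGamma L(1,q)$'s relevant scaling and Frobenius symmetries should pin down the vector $\hat a$ so that too few (or the wrong) Fourier coefficients vanish, contradicting the existence of a nonconstant $w$. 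I expect the main obstacle to be exactly this last matching step: carefully translating the algebraic statement ``$\beta^w\in\mathbb{Q}$ for a nonconstant $w$'' into a precise vanishing pattern of the $\hat a_u$, and then showing that the $2$-homogeneous (non-$2$-transitive) group structure is incompatible with that vanishing pattern except in the trivial case. Handling the two regimes separately is likely needed: if $G_f$ is genuinely $2$-transitive the triviality already follows from \citep[Theorem 3]{2tran} or \citep[Theorem 1]{dixon}, so the real work is confined to the $A\varGamma L(1,q)$ case, where the fractal circulant rank analysis of Section \ref{fcm} does the decisive counting.
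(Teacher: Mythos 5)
Your high-level route is the same as the paper's: settle the $2$-transitive case by \cite[Theorem 3]{2tran}, use the classification of $2$-homogeneous but not $2$-transitive groups (\cite[Proposition 3.1]{nottrans}) to identify the roots with $\mathbb{F}_n$, $n=p^d$, place $G_f$ inside $A\varGamma L(1,n)$, and let the translation subgroup produce a fractal circulant matrix whose corank is controlled by Theorem \ref{whatrank}. However, there is a genuine gap at the center of the proposal, plus a structural confusion that would make the reduction fail as written. The confusion: you say the fractal circulant matrix has ``generating entries $a_v$ built from the coordinates of $w$.'' In the correct argument the exponent vector $(k_\nu)_{\nu\in\mathbb{F}_n}$ is the \emph{unknown} annihilated by the matrix, not its generator; the generators are coefficients $c_\nu$ coming from a preliminary reduction you omit entirely. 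Namely, the point stabilizer $G_f^0$ has orbits $\{r_0\}$, $\{r_\nu\,|\,\nu\in Q\}$, $\{r_\nu\,|\,\nu\in -Q\}$ ($Q$ the nonzero squares), and multiplying the relation $\prod_\nu r_\nu^{k_\nu}\in\mathbb{Q}$ over all $\sigma\in G_f^0$ (and over its conjugate stabilizers $G_f^{\eta(0)}$) compresses it to statements about the pair $\alpha=\big(r_0,\prod_{\nu\in Q}r_\nu\big)^T$. One must then split according to $\mathrm{rank}(\mathcal{R}_\alpha^{\mathbb{Q}})\in\{0,1,2\}$: rank $0$ is elementary linear algebra; rank $2$, and the degenerate subcase of rank $1$, are where the hypothesis that some root is not a root of rational gets used --- your proposal never invokes this hypothesis, yet without it the statement is false (e.g.\ for $x^2+1$, whose Galois group is $2$-homogeneous but $\mathcal{R}_f^{\mathbb{Q}}$ contains $(2,0)^T$); only in the remaining rank-$1$ case, with $\lambda=\ell_1/\ell_2$ the ratio of a basis vector of $\mathcal{R}_\alpha^{\mathbb{Q}}$, does one define $c_0=-1$, $c_\nu=\lambda/\big(\frac{1}{2}(n-1)\big)$ for $\nu\in Q$, $c_\nu=(1-\lambda)/\big(\frac{1}{2}(n-1)\big)$ for $\nu\in -Q$, and obtain the fractal circulant matrix $M=\big(c_{\eta^{-1}(\nu)}\big)$ indexed by the translations $\eta$.

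Second, the decisive step --- showing that $M$ has \emph{no} slicing vectors, hence $\mathrm{corank}(M)=1$ (since $\sum_\nu c_\nu=0$, Theorem \ref{whatrank} gives corank $(p-1)s(M)+1$), so that the solution space of the system is spanned by the all-ones vector and all $k_\nu$ coincide --- is precisely what you leave open (``I expect the main obstacle to be exactly this last matching step''). It is not a soft consequence of $2$-homogeneity; it is a concrete counting argument: if $\tilde u$ were a slicing vector with hyperplanes $P_j$, every slice sum equals $\frac{1}{p}\sum_\nu c_\nu=0$; since $P_{p-1}=-P_1$ and $0\notin\varphi^{-1}(P_1)$, writing $s$ and $t$ for the numbers of squares and non-squares in $\varphi^{-1}(P_1)$ yields $s\lambda+t(1-\lambda)=0$ and $t\lambda+s(1-\lambda)=0$, whose sum forces $s+t=0$, contradicting $s+t=|P_1|=n/p$. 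Note that this is where the classification's arithmetic condition $n\equiv 3\pmod 4$ silently enters: it guarantees $-1$ is a non-square, so $-Q$ is exactly the set of non-squares and the counts swap under negation. Note also that $G_f$ is only guaranteed to be a subgroup of $A\varGamma L(1,n)$ containing the translations, so your appeal to ``all of $A\varGamma L(1,q)$'s relevant scaling and Frobenius symmetries'' is not available. Without the stabilizer-averaging, the rank trichotomy, and this counting argument, the proposal is a plan rather than a proof.
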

\begin{proof}
If $G_f$ is $2$-transitive, the conclusion follows from \cite[Theorem 3]{2tran}. Thus we only need to consider the case where $G_f$ is $2$-homogeneous but not $2$-transitive.

According to \cite[Proposition 3.1]{nottrans}, $n=\deg(f)\equiv3\pmod{4}$ is a prime power and $G_f$ is similar to a subgroup $\Lambda$ of the semi-linear group $A\varGamma L(1,n)$. In other words, if we denote by $\mathfrak{R}$ the set of all the complex roots of $f$, then there is a bijection $\tau:\mathfrak{R}\rightarrow \mathbb{F}_n$ and a group isomorphism $\varrho:G_f\rightarrow \Lambda$ such that
\begin{equation}\label{commu}\sigma(r)=\big(\tau^{-1}\circ\varrho(\sigma)\circ\tau\big)(r)\end{equation}
for any $\sigma\in G_f$ and $r\in\mathfrak{R}$. This means that the group $G_f$ operates on the set $\mathfrak{R}$ in the same way as the group $\Lambda$ does on the set $\mathbb{F}_n$ once we identify the elements of these two sets through the bijection $\tau$. Hence we can index the polynomial roots $\mathfrak{R}$ in the following way: for any $\nu\in\mathbb{F}_n$, we define $r_\nu=\tau^{-1}(\nu)$, then $\mathfrak{R}=\{r_\nu\;|\;\nu\in \mathbb{F}_n\}$ and (\ref{commu}) becomes $\sigma(r_\nu)=r_{(\varrho(\sigma))(\nu)}$.

We denote by $Q$ the set of non-zero squares in the field $\mathbb{F}_n$, and by $G_f^{\nu}$ the subgroup of $G_f$ fixing $r_\nu$. Proposition 3.1 in  \cite{nottrans} also claims that the orbits of the subgroup $G_f^0$ are $\{r_0\}$, $\{r_\nu\;|\;\nu\in Q\}$ and $\{r_\nu\;|\;\nu\in -Q\}$. Setting $\eta\in\Lambda$, one concludes directly that the orbits of the subgroup $G_f^{\eta(0)}$ are $\{r_{\eta(0)}\}$, $\{r_\nu\;|\;\nu\in \eta(Q)\}$ and $\{r_\nu\;|\;\nu \in \eta(-Q)\}$.
 Moreover, $\Lambda$ contains the subgroup of all translations
 \[\Sigma=\big\{\eta:\mathbb{F}_n\rightarrow\mathbb{F}_n,\nu\mapsto \nu+b\;|\;b\in\mathbb{F}_n\big\},\]again by Proposition 3.1 in \cite{nottrans}. Hence $\Lambda$ is transitive on the set $\mathbb{F}_n$ and so is $G_f$ on the set $\mathfrak{R}$. Thus $f$ is irreducible. Since $\deg(f)\geq2$ and $f$ is irreducible, $0\not\in\mathfrak{R}$.

Suppose that
\[\prod_{\nu\in\mathbb{F}_n}r_\nu^{k_\nu}\in\mathbb{Q}\]
for some integers $k_\nu\in\mathbb{Z}$. Set $g=|G_f^0|$, then for any $\nu,\nu'\in Q$ there are $\bar{g}={g}/{|Q|}={g}/\big(\frac{1}{2}(n-1)\big)$ permutations contained in $G_f^0$ which map $r_{\nu'}$ to $r_\nu$. This also holds for any $\nu,\nu'\in-Q$. Setting $\prod_{\nu\in\mathbb{F}_n}r_\nu=b\in\mathbb{Q}$, we obtain
\[
\begin{array}{lcl}\vspace{4mm}
\mathbb{Q}&\ni&\prod_{\sigma\in G_f^0}\sigma\Big(\prod_{\nu\in\mathbb{F}_n}r_\nu^{k_\nu}\Big)\\\vspace{4mm}
&=&r_0^{gk_0}\Big(\prod_{\nu\in Q}r_{\nu}\Big)^{\bar{g}\sum_{\xi\in Q}k_\xi}\Big(\prod_{\nu\in -Q}r_{\nu}\Big)^{\bar{g}\sum_{\xi\in -Q}k_\xi}\\
&=&r_0^{gk_0}\Big(\prod_{\nu\in Q}r_{\nu}\Big)^{\bar{g}\sum_{\xi\in Q}k_\xi}\Big(\frac{b}{r_0\prod_{\nu\in Q}r_\nu}\Big)^{\bar{g}\sum_{\xi\in -Q}k_\xi}.
\end{array}
\]
Hence
\begin{equation}\label{orbit}
r_0^{gk_0-\bar{g}\sum_{\xi\in -Q}k_\xi}\Big(\prod_{\nu\in Q}r_{\nu}\Big)^{\bar{g}\big(\sum_{\xi\in Q}k_\xi-\sum_{\xi\in -Q}k_\xi\big)}\in\mathbb{Q}.
\end{equation}
Noting that for any $\eta\in\Lambda$, $\prod_{\nu\in\mathbb{F}_n}r_{\eta(\nu)}^{k_{\eta(\nu)}}=1$ and that the orbits of the group $G_f^{\eta(0)}$ are $\{r_{\eta(0)}\}$, $\{r_\nu\;|\;\nu\in\eta(Q)\}$ and $\{r_\nu\;|\;\nu\in\eta(-Q)\}$, we can similarly obtain
\begin{equation}\label{etaorbit}
r_{\eta(0)}^{gk_{\eta(0)}-\bar{g}\sum_{\xi\in -Q}k_{\eta(\xi)}}\Big(\prod_{\nu\in Q}r_{\eta(\nu)}\Big)^{\bar{g}\cdot\big(\sum_{\xi\in Q}k_{\eta(\xi)}-\sum_{\xi\in -Q}k_{\eta(\xi)}\big)}\in\mathbb{Q}
\end{equation}
with $g=|G_f^{\eta(0)}|$ and $\bar{g}=|G_f^{\eta(0)}|/|\eta(Q)|$ having respectively the same values as in (\ref{orbit}) since $G_f$ is transitive.

Set $\alpha=(\alpha_1,\alpha_2)^T=(r_0,\prod_{\nu\in Q}r_{\nu})^T$, then $\text{rank}(\mathcal{R}^{\mathbb{Q}}_\alpha)=0,1$ or $2$. In the sequel we will consider these three cases separately.
\\

\noindent{}\textcolor{blue}{\emph{Case 0: $\emph{\text{rank}}(\mathcal{R}^{\mathbb{Q}}_\alpha)=0.$}}

Let $\sigma=\varrho^{-1}(\eta)\in G_f$, then for the vector $\sigma(\alpha)=(r_{\eta(0)},\prod_{\nu\in Q}r_{\eta(\nu)})^T$, the lattice $\mathcal{R}^{\mathbb{Q}}_{\sigma(\alpha)}$ is also of rank $0$. Thus we obtain from (\ref{etaorbit}) that
\[
\left\{{\begin{array}{*{20}{rcl}}\vspace{3mm}
gk_{\eta(0)}-\bar{g}\sum_{\xi\in-Q}k_{\eta(\xi)}&=&0,\\
\bar{g}\cdot\big(\sum_{\xi\in Q}k_{\eta(\xi)}-\sum_{\xi\in-Q}k_{\eta(\xi)}\big)&=&0.
\end{array}}\right.
\]
Hence $k_{\eta(0)}=\frac{1}{\frac{1}{2}(n-1)}\sum_{\xi\in Q}k_{\eta(\xi)}=\frac{1}{\frac{1}{2}(n-1)}\sum_{\xi\in-Q}k_{\eta(\xi)}$. It follows immediately that $k_{\eta(0)}=\big(\sum_{\nu\in\mathbb{F}_n}k_\nu\big)/{n}$, which does not depend on $\eta$ at all. Since $\Lambda$ operates transitively on the set $\mathbb{F}_n$, we conclude that all $k_\nu$ share the same value.
\\

\noindent{}\textcolor{blue}{\emph{Case 2: $\emph{\text{rank}}(\mathcal{R}^{\mathbb{Q}}_\alpha)=2.$}}

In this case, we can chose two basis vectors $v,u\in\mathbb{Z}^2$ of  $\mathcal{R}^{\mathbb{Q}}_\alpha$ so that the $2\times2$ matrix $(v, u)$ is in Hermite normal form and $v(2)=0,v(1)\geq1$.  Then $r_0^{v(1)}=\alpha_1^{v(1)}\alpha_2^{v(2)}\in\mathbb{Q}$, thus $r_0$ is a root of rational. Since $f$ is irreducible, $f(x)|x^{v(1)}-r_0^{v(1)}$ in $\mathbb{Q}[x]$. This implies that every root of $f$ is a root of rational, which contradicts the assumption of the theorem.
\\

\noindent{}\textcolor{blue}{\emph{Case 1: $\emph{\text{rank}}(\mathcal{R}^{\mathbb{Q}}_\alpha)=1.$}}

Suppose that $\ell=(\ell_1,\ell_2)^T$ is a basis of $\mathcal{R}_\alpha^\mathbb{Q}$. As in Case 2, $\ell_2=0$ implies that every root of $f$ is a root of rational. Thus we may assume that $\ell_2>0$. Note that $\ell$ is also a basis of $\mathcal{R}^{\mathbb{Q}}_{\sigma(\alpha)}$. Combining this with (\ref{etaorbit}) we have
\[
\ell_2\cdot\big(gk_{\eta(0)}-\bar{g}\sum_{\xi\in-Q}k_{\eta(\xi)}\big)=\ell_1\cdot \bar{g}\cdot\big(\sum_{\xi\in Q}k_{\eta(\xi)}-\sum_{\xi\in-Q}k_{\eta(\xi)}\big),
\]which is equivalent to
\begin{equation}\label{lambdarow}
-k_{\eta(0)}+\frac{\lambda}{\frac{1}{2}(n-1)}\sum_{\xi\in Q}k_{\eta(\xi)}+\frac{1-\lambda}{\frac{1}{2}(n-1)}\sum_{\xi\in -Q}k_{\eta(\xi)}=0
\end{equation}
if we set $\lambda=\ell_1/\ell_2$. Denote $c_0=-1$, $c_\nu=\frac{\lambda}{\frac{1}{2}(n-1)}$ for $\nu\in Q$ and $c_\nu=\frac{1-\lambda}{\frac{1}{2}(n-1)}$ for $\nu\in-Q$. Then (\ref{lambdarow}) becomes
\[
\sum_{\nu\in\mathbb{F}_n}c_\nu k_{\eta(\nu)}=0,
\]
which can be re-formulated in the following way
\begin{equation}\label{etainv}
\sum_{\nu\in\mathbb{F}_n}c_{\eta^{-1}(\nu)} k_{\nu}=0.
\end{equation}

Considering the following equations with unknown integers $z_\nu$:
\begin{equation}\label{equ}
\sum_{\nu\in\mathbb{F}_n}c_{\eta^{-1}(\nu)} z_{\nu}=0,
\end{equation}
one observes from (\ref{lambdarow}) and (\ref{etainv}) that $z_\nu=1$ (for all $\nu$) is a solution. It is sufficient to prove that the coefficient matrix $A=\big(c_{\eta^{-1}(\nu)}\big)_{\small\substack{\eta\in\Lambda\;\;\\
\nu\in\mathbb{F}_n}}$ is of rank $ n-1$ for any rational number $\lambda$ (not only for $\lambda=\ell_1/\ell_2$). Since if this is true, then (\ref{etainv}) will imply that the vector $(k_\nu)_{\nu\in\mathbb{F}_n}=m\cdot(1,1,\ldots,1)^T$ for some integer $m$. Thus all $k_\nu$ share the same value.

Note that $\text{rank}(A)\leq n-1$ and that the group of all translations
\[\Sigma=\big\{\eta:\mathbb{F}_n\rightarrow\mathbb{F}_n,\nu\mapsto\nu+b\;|\;b\in\mathbb{F}_n\big\}\]
is a subgroup of $\Lambda$. We only need to proof that the submatrix $M=\big(c_{\eta^{-1}(\nu)}\big)_{\small\substack{\eta\in\Sigma\;\;\\
\nu\in\mathbb{F}_n}}$ of $A$ is of rank $n-1$. Suppose that $n=p^d$ for a prime number $p$ and a positive integer $d$, then $\mathbb{F}_n$ is a $d$-dimensional $\mathbb{F}_p$-vector space. Fixing any basis $\{\mu_1,\ldots,\mu_d\}$ of $\mathbb{F}_n$, we can define a linear isomorphism $\varphi: \mathbb{F}_n\rightarrow\mathbb{F}_p^d,\nu\mapsto v$ such that $\nu=v(1)\mu_1+\cdots+v(d)\mu_d$ and a group isomorphism $\psi: \Sigma\rightarrow\mathbb{F}_p^d,\eta\mapsto u$ such that $\eta(0)=u(1)\mu_1+\cdots+u(d)\mu_d$. One notices that $\varphi\big(\eta(0)\big)=u=\psi(\eta)$ and $\varphi\big(\eta^{-1}(\nu)\big)=\varphi\big(\nu-\eta(0)\big)=\varphi(\nu)-\varphi\big(\eta(0)\big)=v-u$. Then $\eta^{-1}(\nu)=\varphi^{-1}(v-u)$ and $M=\big(c_{\eta^{-1}(\nu)}\big)_{\small\substack{\eta\in\Sigma\;\;\\
\nu\in\mathbb{F}_n}}=\big(c_{\varphi^{-1}(v-u)}\big)_{\small u,v\in\mathbb{F}_p^d}$ is a fractal circulant matrix of prime order $p$ and depth $d$ generated by the rational numbers $c_{\varphi^{-1}(v)},v\in\mathbb{F}_p^d$.

In the following we prove that $M$ has no slicing vectors by contradiction.

Suppose that $\tilde{u}\in\mathbb{F}_p^d$ is a slicing vector of $M$. As usual, we denote $P_j=\{v\in\mathbb{F}_p^d\;|\;\tilde{u}\cdot v=j\}$, $j\in\mathbb{F}_p$. Then
\[
\sum_{v\in P_0}c_{\varphi^{-1}(v)}=\sum_{v\in P_1}c_{\varphi^{-1}(v)}=\cdots=\sum_{v\in P_{p-1}}c_{\varphi^{-1}(v)}=\frac{1}{p}\sum_{v\in\mathbb{F}_p^d}c_{\varphi^{-1}(v)}=\frac{1}{p}\sum_{\nu\in\mathbb{F}_n}c_{\nu}=0.
\]
Noting that $P_{p-1}=-P_1$, one concludes that
\begin{equation}\label{neg}
\varphi^{-1}(P_{p-1})=-\varphi^{-1}(P_1).
\end{equation} Since $\varphi(0)=\mathbf{0}\in P_0$, $0\in\varphi^{-1}(P_0)$. Hence $0\not\in\varphi^{-1}(P_1)$ and $0\not\in\varphi^{-1}(P_{p-1})$. Denote the number of squares in the set $\varphi^{-1}(P_1)$ by $s=|Q\cap\varphi^{-1}(P_1)|$ and the number of non-squares by $t=|(-Q)\cap\varphi^{-1}(P_1)|$, then $|Q\cap\varphi^{-1}(P_{p-1})|=t$ and $|(-Q)\cap\varphi^{-1}(P_{p-1})|=s$ follow from (\ref{neg}). We obtain
\[0=\sum_{v\in P_1}c_{\varphi^{-1}(v)}=\sum_{\nu\in Q\cap\varphi^{-1}(P_1)}c_{\nu}+\sum_{\nu\in (-Q)\cap\varphi^{-1}(P_1)}c_{\nu}=\frac{s\lambda}{\frac{1}{2}(n-1)}+\frac{t(1-\lambda)}{\frac{1}{2}(n-1)},\]and
\[0=\sum_{v\in P_{p-1}}c_{\varphi^{-1}(v)}=\sum_{\nu\in Q\cap\varphi^{-1}(P_{p-1})}c_{\nu}+\sum_{\nu\in (-Q)\cap\varphi^{-1}(P_{p-1})}c_{\nu}=\frac{t\lambda}{\frac{1}{2}(n-1)}+\frac{s(1-\lambda)}{\frac{1}{2}(n-1)}.\]
Adding these two equations we obtain $s+t=0$, which contradicts the fact that $s+t=|\varphi^{-1}(P_1)|=|P_1|=n/p$. Hence $M$ has no slicing vectors. By Theorem \ref{whatrank} we claim that $\text{corank}(M)=1$, which is what we want.
\end{proof}

Once the exponent lattice $\mathcal{R}_f$ defined by the roots of a polynomial $f\in\mathbb{Q}[x]$ is known to be trivial, a basis of $\mathcal{R}_f$ can be obtained directly according to the following proposition.
\begin{proposition}\label{tribas}
If $f\in\mathbb{Q}[x]$ is monic, $f(0)\neq0$, $\mathbf{1}=(1,1,\ldots,1)^T\in\mathbb{Z}^{\deg(f)}$ and $\mathcal{R}_f$ is trivial, then exactly one of the following cases holds:\\
(i) $\mathcal{R}_f=\{\mathbf{0}\}$, if $f(0)\not\in\{1,-1\}$;\\
(ii) $\mathcal{R}_f=\{k\mathbf{1}|\;k\in\mathbb{Z}\}$, if $(-1)^{\deg(f)}\cdot f(0)=1$;\\
(iii) $\mathcal{R}_f=\{2k\mathbf{1}|\;k\in\mathbb{Z}\}$, if $(-1)^{\deg(f)}\cdot f(0)=-1$.
\end{proposition}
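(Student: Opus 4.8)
The plan is to exploit the relationship between the roots of $f$ and its constant term $f(0)$, together with the fact that triviality of $\mathcal{R}_f$ forces every relation vector to be a scalar multiple of $\mathbf{1}$. Let $\beta=(\beta_1,\ldots,\beta_n)^T$ be the roots of $f$ with $n=\deg(f)$. Since $f$ is monic, Vieta's formula gives $\beta_1\cdots\beta_n=(-1)^n f(0)$, so $\mathbf{1}\in\mathcal{R}_f$ if and only if $(-1)^n f(0)=1$, and $2\mathbf{1}\in\mathcal{R}_f$ if and only if $\big((-1)^n f(0)\big)^2=1$, i.e. $(-1)^n f(0)\in\{1,-1\}$. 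This single observation is what separates the three cases.

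First I would record that triviality means $\mathcal{R}_f\subseteq\{k\mathbf{1}\mid k\in\mathbb{Z}\}$, so $\mathcal{R}_f$ is a subgroup (indeed a sublattice) of the rank-one lattice $\mathbb{Z}\mathbf{1}$. Every subgroup of $\mathbb{Z}\mathbf{1}$ is of the form $d\mathbb{Z}\mathbf{1}$ for a unique integer $d\geq0$; thus it suffices to determine $d$, the smallest nonnegative integer with $d\mathbf{1}\in\mathcal{R}_f$. The quantity $\beta^{k\mathbf{1}}=(\beta_1\cdots\beta_n)^k=\big((-1)^n f(0)\big)^k$ equals $1$ precisely when $k$ is a multiple of the multiplicative order of $(-1)^n f(0)$ in $\overline{\mathbb{Q}}^*$. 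So the whole problem reduces to computing that order, call it $d$.

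Next I would split on the value of $(-1)^n f(0)$, equivalently on $f(0)$. If $(-1)^nf(0)=1$, then $\mathbf{1}\in\mathcal{R}_f$ and $d=1$, giving case (ii). If $(-1)^nf(0)=-1$, then $\mathbf{1}\notin\mathcal{R}_f$ but $2\mathbf{1}\in\mathcal{R}_f$, so $d=2$, giving case (iii). Otherwise $(-1)^n f(0)\notin\{1,-1\}$; here I must argue that no positive power of the rational number $(-1)^n f(0)$ equals $1$. Since this is a rational number of absolute value either not equal to $1$, or equal to $1$ but not in $\{1,-1\}$ (which is impossible for a real rational of modulus one), its only finite-order possibilities among rationals are $\pm1$. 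Concretely, the only roots of unity in $\mathbb{Q}^*$ are $1$ and $-1$, so $\big((-1)^nf(0)\big)^k=1$ forces $(-1)^nf(0)\in\{1,-1\}$, contradicting the hypothesis; hence $d=0$ and $\mathcal{R}_f=\{\mathbf{0}\}$, which is case (i). I would also note the three cases are mutually exclusive and exhaustive, matching the stated conditions $f(0)\notin\{1,-1\}$ versus $(-1)^nf(0)=\pm1$, once one checks that $f(0)\notin\{1,-1\}$ is equivalent to $(-1)^nf(0)\notin\{1,-1\}$.

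The only genuinely substantive point—the part I would flag as the crux rather than bookkeeping—is the claim that the sole roots of unity in $\mathbb{Q}$ are $\pm1$; everything else is Vieta plus the classification of subgroups of $\mathbb{Z}$. That fact is elementary (a rational root of $x^k-1$ must be an integer dividing the constant term, forcing $\pm1$), so I expect the proof to be short, with the main care going into verifying that the case hypotheses as phrased in (i)–(iii) line up exactly with the sign bookkeeping in $(-1)^n f(0)$.
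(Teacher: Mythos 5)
Your proposal is correct and follows essentially the same route as the paper's own proof: triviality forces every relation to be an integer multiple of $\mathbf{1}$, Vieta's formula identifies $\beta_1\cdots\beta_n=(-1)^{\deg(f)}f(0)$, and the fact that the only roots of unity in $\mathbb{Q}^*$ are $\pm1$ settles the three cases. Your extra framing via subgroups $d\mathbb{Z}\mathbf{1}$ of $\mathbb{Z}\mathbf{1}$ and the multiplicative order of $(-1)^{\deg(f)}f(0)$ is just a more explicit packaging of the same argument.
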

\begin{proof}
Suppose that $\deg(f)=n$ and $\beta_1^{k_1}\cdots\beta_n^{k_n}=1$ for the roots $\beta_i$ and some integers $k_i$. Then $k_1=\cdots=k_n=k$ for an integer $k$ since $\mathcal{R}_f$ is trival. Hence $1=(\beta_1\cdots\beta_n)^k=\big((-1)^n\cdot f(0)\big)^k$. If $f(0)\not\in\{1,-1\}$, then $k=0$. The rest two cases also follow directly.
\end{proof}
\section{Deciding $2$-Homogeneous Galois Groups}\label{deciding}
The purpose of this section is to decide whether a given polynomial satisfies the assumptions of Theorem \ref{thm:main}.

From the proof of  Theorem \ref{thm:main}, one notes that a polynomial $f$ satisfying those assumptions is necessarily irreducible since $G_f$ is transitive either when $G_f$ is $2$-transitive or when it is $2$-homogeneous but not $2$-transitive.

According to \cite[Proposition 5.2]{ror}, either all the roots of $f$ are roots of rational or none of them is a root of rational. Moreover, \cite[Algorithm 5]{ror} distinguishes these two cases for a given irreducible polynomial $f$.

The problem is reduced to deciding whether $G_f$ is $2$-homogeneous or not, provided that none of the roots of the irreducible polynomial $f$ is a root of rational.  The approach shown below needs not compute the Galois group $G_f$ at all.

For any $g\in\mathbb{Q}[x] \;(\deg(g)>1)$ with complex roots $\beta_1,\ldots,\beta_{\deg(g)}$ listed with multiplicity, define
\[g_{[2]}(x)=\prod_{1\leq i<j\leq \deg(g)}(x-\beta_i\beta_j).\]
We observe that $g_{[2]}(x)\in\mathbb{Q}[x]$ and  the following proposition holds:
\begin{proposition}\label{2hiff}
For an irreducible polynomial $f\in\mathbb{Q}[x]$ with no root being a root of rational, $G_f$ is $2$-homogeneous iff $f_{[2]}$ is irreducible in $\mathbb{Q}[x]$.
\begin{proof}
``If'': This is by \cite[Lemma 5.3]{generic} (taking $k=2$ therein). In fact we do not need the assumption that none of the roots of $f$ is a root of rational in this direction.

``Only If'': Denote $n=\deg(f)$ and set $\beta_1,\ldots,\beta_n$ to be the roots of $f$. Setting $g=f_{[2]}$, we claim that $g$ has no multiple roots. Since if $g$ has multiple roots, then $\beta_i\beta_j=\beta_{i'}\beta_{j'}$ for some $i<j$, $i'<j'$, $\{i,j\}\neq\{i',j'\}$. Now that $\big|\{i,j\}\cap\{i',j'\}\big|\leq1$, $\beta_i\beta_j\beta_{i'}^{-1}\beta_{j'}^{-1}=1$ results in a non-trivial relation in $\mathcal{R}_f$. However, $\mathcal{R}_f$ is trivial by Theorem \ref{thm:main}, which is a contradiction.

Denote by $\mathcal{F}_f$ and $\mathcal{F}_g$ the splitting fields of $f$ and $g$ over $\mathbb{Q}$ respectively. Then by Galois theory, $\varphi: G_f\rightarrow G_g$, $\sigma\mapsto\sigma|_{_{\mathcal{F}_g}}$ is a surjective group homomorphism with kernel $N=\text{Gal}(\mathcal{F}_f/\mathcal{F}_g)$. Suppose that $\beta_i\beta_j$ and $\beta_{i'}\beta_{j'}$ ($i<j,i'<j'$) are two distinct roots of $g$. Since $G_f$ is $2$-homogeneous, there is a $\sigma\in G_f$ such that $\{\sigma(\beta_i),\sigma(\beta_j)\}=\{\beta_{i'},\beta_{j'}\}$. Thus $\sigma|_{_{\mathcal{F}_g}}(\beta_i\beta_j)=\sigma(\beta_i\beta_j)=\sigma(\beta_i)\sigma(\beta_j)=\beta_{i'}\beta_{j'}$. This means $G_g$ is transitive regarded as a permutation group operating on the set of all the roots of $g$. Hence $g=f_{[2]}$ is irreducible in $\mathbb{Q}[x]$.
\end{proof}
\end{proposition}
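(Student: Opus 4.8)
The plan is to translate the combinatorial condition of $2$-homogeneity, which concerns the action of $G_f$ on unordered pairs of roots, into a statement about the Galois action on the roots of $f_{[2]}$, and then to exploit the standard fact that a squarefree polynomial over $\mathbb{Q}$ is irreducible precisely when its Galois group acts transitively on its roots. The bridge between the two pictures is the map sending an unordered pair $\{\beta_i,\beta_j\}$ to the product $\beta_i\beta_j$, which is a root of $f_{[2]}$; the whole argument hinges on controlling when this map is a bijection, i.e.\ on separability of $f_{[2]}$.

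For the direction ``$G_f$ $2$-homogeneous $\Rightarrow$ $f_{[2]}$ irreducible'', I would first argue that $f_{[2]}$ is squarefree. If it were not, two distinct pairs would satisfy $\beta_i\beta_j=\beta_{i'}\beta_{j'}$ with $\{i,j\}\neq\{i',j'\}$; since $\big|\{i,j\}\cap\{i',j'\}\big|\le 1$, this produces a nonzero vector in $\mathcal{R}_f$ that is not a multiple of $\mathbf{1}$. But the hypotheses ($f$ irreducible, no root a root of rational, $G_f$ $2$-homogeneous) let me invoke Theorem \ref{thm:main} to conclude that $\mathcal{R}_f^\mathbb{Q}$, and a fortiori $\mathcal{R}_f$, is trivial, a contradiction. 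With squarefreeness in hand, the products $\beta_i\beta_j$ are exactly the distinct roots of $f_{[2]}$. Then, via the restriction homomorphism $G_f\twoheadrightarrow G_{f_{[2]}}$ coming from the inclusion $\mathcal{F}_{f_{[2]}}\subseteq\mathcal{F}_f$ (together with $\sigma(\beta_i\beta_j)=\sigma(\beta_i)\sigma(\beta_j)$), the $2$-homogeneity of $G_f$ on pairs transfers to transitivity of $G_{f_{[2]}}$ on the roots of $f_{[2]}$. Since a squarefree polynomial over $\mathbb{Q}$ whose roots form a single Galois orbit is irreducible, $f_{[2]}$ is irreducible.

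For the converse ``$f_{[2]}$ irreducible $\Rightarrow$ $G_f$ $2$-homogeneous'', I would note that an irreducible polynomial over the characteristic-zero field $\mathbb{Q}$ is automatically separable, so its $\binom{n}{2}$ roots are distinct; since $f_{[2]}=\prod_{i<j}(x-\beta_i\beta_j)$ already has exactly $\binom{n}{2}$ linear factors, the map $\{\beta_i,\beta_j\}\mapsto\beta_i\beta_j$ is then a bijection from the $2$-subsets of roots onto the root set of $f_{[2]}$. Irreducibility makes $G_f$ transitive on these products, and pulling back along the bijection gives transitivity on $2$-subsets, which is exactly $2$-homogeneity. This direction uses neither the triviality of $\mathcal{R}_f$ nor the no-root-of-rational hypothesis.

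I expect the main obstacle to be the first direction, specifically the squarefreeness of $f_{[2]}$: this is the only place where a nontrivial input is required, and it is precisely where the no-root-of-rational assumption enters, through Theorem \ref{thm:main}, to forbid the coincidences $\beta_i\beta_j=\beta_{i'}\beta_{j'}$. Once separability of $f_{[2]}$ is secured in each direction, the remaining steps are the routine dictionary between ``irreducible $=$ squarefree $+$ single Galois orbit'' and the transfer of transitivity through the surjection $G_f\twoheadrightarrow G_{f_{[2]}}$.
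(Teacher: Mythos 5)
Your proposal is correct, and its hard direction (``$2$-homogeneous $\Rightarrow$ $f_{[2]}$ irreducible'') follows exactly the paper's route: squarefreeness of $f_{[2]}$ via Theorem \ref{thm:main} (a coincidence $\beta_i\beta_j=\beta_{i'}\beta_{j'}$ with $|\{i,j\}\cap\{i',j'\}|\le 1$ would give a nontrivial vector in $\mathcal{R}_f$, which is impossible since $\mathcal{R}_f\subset\mathcal{R}_f^\mathbb{Q}$ and the latter is trivial), followed by the restriction surjection $G_f\twoheadrightarrow G_{f_{[2]}}$ to convert $2$-homogeneity into transitivity on the roots of the squarefree polynomial $f_{[2]}$, hence irreducibility. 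Where you genuinely differ is the converse: the paper disposes of it by citing Lemma 5.3 of \cite{generic} with $k=2$, whereas you prove it directly --- irreducibility of $f_{[2]}$ over $\mathbb{Q}$ gives separability, so the $\binom{n}{2}$ products $\beta_i\beta_j$ are pairwise distinct and the pair-to-product map $\{\beta_i,\beta_j\}\mapsto\beta_i\beta_j$ is a bijection onto the roots of $f_{[2]}$; transitivity of $G_f$ on those roots (all conjugate, since $f_{[2]}$ is their common minimal polynomial) then pulls back along this bijection, using $\sigma(\beta_i\beta_j)=\sigma(\beta_i)\sigma(\beta_j)$ and injectivity, to transitivity on $2$-subsets. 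This makes the proposition self-contained at the cost of a few lines; it also confirms, by the same reasoning as the paper's remark, that this direction needs neither the triviality of $\mathcal{R}_f$ nor the no-root-of-rational hypothesis. Both arguments are sound; yours trades an external reference for an elementary bijection argument, which is arguably a net gain in readability.
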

\begin{corollary}\label{dis}
For an irreducible polynomial $f\in\mathbb{Q}[x]$ with no root being a root of rational, $G_f$ is $2$-homogeneous iff the degree of the minimal polynomial of the number $(\beta_1\beta_2)$ over $\mathbb{Q}$ equals $\deg(f)(\deg(f)-1)/2$.
\end{corollary}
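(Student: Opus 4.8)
The plan is to read the corollary off directly from Proposition~\ref{2hiff}, which already equates $2$-homogeneity of $G_f$ with irreducibility of $f_{[2]}$; what then remains is a purely elementary statement about minimal polynomials. First I would record the structural facts about $g:=f_{[2]}$ that do all the work. Writing $n=\deg(f)$, the polynomial $g$ lies in $\mathbb{Q}[x]$ (as already observed in the excerpt), it is monic, and, being defined as the formal product $\prod_{1\le i<j\le n}(x-\beta_i\beta_j)$, it is a product of exactly $n(n-1)/2$ monic linear factors and hence has degree exactly $n(n-1)/2$, regardless of whether the products $\beta_i\beta_j$ happen to coincide. Moreover $\beta_1\beta_2$ is visibly one of its roots.

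Next I would introduce $m\in\mathbb{Q}[x]$, the monic minimal polynomial of $\beta_1\beta_2$ over $\mathbb{Q}$. Since $\beta_1\beta_2$ is a root of $g\in\mathbb{Q}[x]$, minimality gives $m\mid g$ in $\mathbb{Q}[x]$, so $\deg m\le n(n-1)/2$. The elementary observation that drives the corollary is that, $m$ and $g$ both being monic, $m\mid g$ together with $\deg m=\deg g$ forces $m=g$. Consequently $\deg m=n(n-1)/2$ holds if and only if $m=g$, that is, if and only if $g$ equals an irreducible polynomial, i.e.\ if and only if $g$ is itself irreducible.

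With this equivalence in hand the two implications are immediate. If $G_f$ is $2$-homogeneous, then $g=f_{[2]}$ is irreducible by Proposition~\ref{2hiff}; being monic with $\beta_1\beta_2$ as a root it must coincide with $m$, so $\deg m=n(n-1)/2$. Conversely, if $\deg m=n(n-1)/2$, then $m=g$ by the observation above, so $g$ is irreducible, and Proposition~\ref{2hiff} yields that $G_f$ is $2$-homogeneous.

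There is essentially no hard step here: all the substantive content, namely the passage between $2$-homogeneity and the irreducibility of $f_{[2]}$, is already carried by Proposition~\ref{2hiff}, and the present statement is a reformulation of ``$f_{[2]}$ is irreducible'' in terms of the degree of a minimal polynomial. The only point needing a moment's care is making the degree comparison unambiguous: one must use that $\deg f_{[2]}$ is unconditionally $n(n-1)/2$, so that the hypothesis $\deg m=n(n-1)/2$ genuinely asserts that the minimal polynomial exhausts $f_{[2]}$. Note that squarefreeness of $f_{[2]}$, which was required inside the proof of Proposition~\ref{2hiff}, is not needed here: in either direction $f_{[2]}$ ends up equal to the irreducible polynomial $m$ and is therefore automatically squarefree.
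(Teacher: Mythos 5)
Your proposal is correct and takes essentially the same approach as the paper: the paper's entire proof is the one-line remark that irreducibility of $f_{[2]}$ is ``obviously'' equivalent to $\deg(\beta_1\beta_2)=\deg(f)(\deg(f)-1)/2$, combined with Proposition~\ref{2hiff}. You have simply spelled out that obvious step (the minimal polynomial of $\beta_1\beta_2$ divides the monic polynomial $f_{[2]}$ of degree $\deg(f)(\deg(f)-1)/2$, and equality of degrees forces equality of the polynomials), which is a faithful expansion rather than a different route.
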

\begin{proof}
It is obvious that $f_{[2]}$ is irreducible iff $\deg(\beta_1\beta_2)=\deg(f)(\deg(f)-1)/2$.
\end{proof}

Thus we can decide whether $G_f$ is $2$-homogeneous by computing the degree of the minimal polynomial of the number $(\beta_1\beta_2)$, which can be done efficiently.
\section{What Kind of and How Many Polynomials Can Be Handled}\label{E}
In this section, we define (Definition \ref{def:easy}) the set of polynomials $f$ such that we can compute the lattice $\mathcal{R}_f$ efficiently by using Theorem \ref{thm:main}, together with some results in \cite{issac}.
Moreover, we prove that this set is a generic subset of $\mathbb{Q}[x]$ (Corollary \ref{realtrunc}) in the sense that the probability measure of its truncations tends to one as the upper bound of the height of those polynomials that are considered becomes larger. In fact, we prove a more general result (Proposition \ref{truncation}) indicating that $E$ is generic in a similar sense as mentioned above, even if we only take into account those polynomials with some coefficients fixed properly.%some of whose coefficients are fixed properly.%Moreover, the probability measure of the truncations of this set tends to one as the upper bound of the height of those polynomials that are considered becomes larger (Proposition \ref{truncation}).
\subsection{Defining the Set of the Polynomials that Can Be Handled}\label{Edef}
\begin{definition}\label{def:easy}
The set $E\subset\mathbb{Q}[x]$ is defined to be the set of polynomials $f$ so that both the following conditions hold:\\
$(i)$ $\exists c\in\mathbb{Q^*}$, $g\in\mathbb{Q}[x],k\in\mathbb{Z}_{\geq1}$ so that $f=cg^k$, $g$ is irreducible and $x{\not|}\,g(x)$;\\
$(ii)$ every root of $g$ is a root of rational or $g_{[2]}$ is irreducible.
\end{definition}

To understand why $\mathcal{R}_f$ can be efficiently computed for $f\in E$, one needs to observe first that $\mathcal{R}_f$ can be derived from $\mathcal{R}_g$ if $f=cg^k$. More generally, the following proposition follows from \cite[Definition 3.1]{issac} directly:

\begin{proposition}\label{multi}
 Suppose that $\ell_s{\geq0}\;(s=1,\ldots,n)$ are nonnegative integers, and that $\beta=\big(\beta_1,\beta_2,\ldots,\beta_{n+\sum_{s=1}^n\ell_s}\big)^T\in\big(\overline{\mathbb{Q}}^*\big)^{n+\sum_{s=1}^n\ell_s}$ satisfies
\[
\beta_{n+\sum_{s=1}^{i-1}{\ell_s}+1}=\beta_{n+\sum_{s=1}^{i-1}{\ell_s}+2}=\cdots=\beta_{n+\sum_{s=1}^{i-1}{\ell_s}+\ell_i}=\beta_{i}
\]
for $i=1,\ldots,n$. If $\mathcal{B}$ is a triangular basis $($as defined in  \emph{\cite[Definition 3.1]{issac}}$)$ of $\mathcal{R}_{\bar{\beta}}$, with $\bar{\beta}=(\beta_1,\ldots,\beta_n)^T$, then a triangular basis of $\mathcal{R}_\beta$ is given by:
\begin{equation}\label{ltb}
\mathcal{B}\cup\{\mathbf{\varepsilon}_{ij}\}_{1\leq i\leq n,1\leq j \leq \ell_i}.
\end{equation}
Here each $\mathbf{\varepsilon}_{ij}$ is in $\mathbb{Z}^{n+\sum_{s=1}^n\ell_s}$, whose coordinates $\varepsilon_{ij}(\iota),\iota=1,2,\ldots,n+\sum_{s=1}^n\ell_s$, are given by
\[
\varepsilon_{ij}(\iota)=\left\{{\begin{array}{*{20}{rl}}
-1,&\text{ if } \iota=i,\\
1,&\text{ if } \iota=n+\sum_{s=1}^{i-1}\ell_s+j,\\
0,&\text{ else. }
\end{array}}
\right.
\]
Each vector in $\mathcal{B}$ is in $\mathbb{Z}^n$, but in $(\ref{ltb})$ each of them is regarded as a vector in $\mathbb{Z}^{n+\sum_{s=1}^n\ell_s}$, with the extra coordinates indexed by $\iota>n$ being zeros.
%$\varepsilon_{ij}\big(\sum_{s=1}^{i-1}\ell_s+j\big)=1$, $\varepsilon_{ij}(i)=-1$ and the rest coordinates all equal to zero.
\end{proposition}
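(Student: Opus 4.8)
The plan is to realize $\mathcal{R}_\beta$ as the preimage of $\mathcal{R}_{\bar\beta}$ under the linear map that collapses the repeated coordinates, and then to read off a basis from a basis of $\mathcal{R}_{\bar\beta}$ together with a basis of the kernel of that map. First I would introduce the collapsing homomorphism. Write $N=n+\sum_{s=1}^n\ell_s$ and let the block of indices attached to $\beta_i$ be $G_i=\{i\}\cup\{n+\sum_{s=1}^{i-1}\ell_s+j\;|\;1\leq j\leq\ell_i\}$, so that $\{1,\ldots,N\}=\bigsqcup_{i=1}^n G_i$ and $\beta_\iota=\beta_i$ for every $\iota\in G_i$. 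Define $\pi:\mathbb{Z}^N\rightarrow\mathbb{Z}^n$ by $(\pi v)(i)=\sum_{\iota\in G_i}v(\iota)$. Since the entries of $\beta$ are constant on each block, any $v\in\mathbb{Z}^N$ satisfies $\prod_{\iota=1}^N\beta_\iota^{v(\iota)}=\prod_{i=1}^n\beta_i^{(\pi v)(i)}$, whence $v\in\mathcal{R}_\beta$ iff $\pi v\in\mathcal{R}_{\bar\beta}$; that is, $\mathcal{R}_\beta=\pi^{-1}(\mathcal{R}_{\bar\beta})$.

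Next I would match the two pieces of the claimed basis with the two natural constituents of such a preimage. The map $\pi$ is surjective with kernel $K$ of rank $N-n=\sum_s\ell_s$. A direct check shows that each $\varepsilon_{ij}$ lies in $K$ (its only nonzero entries are $-1$ at position $i$ and $+1$ at a position of $G_i$, so every block sum vanishes) and that the $\varepsilon_{ij}$ are $\mathbb{Z}$-linearly independent with pivots at the distinct fresh positions $n+\sum_{s=1}^{i-1}\ell_s+j$; hence they form a basis of $K$. Let $s:\mathbb{Z}^n\rightarrow\mathbb{Z}^N$ be the section padding a vector with zeros in all coordinates $>n$, so that $\pi\circ s=\mathrm{id}$ and $s$ carries $\mathcal{R}_{\bar\beta}$ into $\mathcal{R}_\beta$. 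For spanning, given $v\in\mathcal{R}_\beta$ I write $\pi v=\sum_k a_k\bar b_k$ with $\bar b_k\in\mathcal{B}$; then $v-s(\pi v)\in K$, so $v$ is an integer combination of $s(\mathcal{B})$ and the $\varepsilon_{ij}$. For independence, applying $\pi$ to a vanishing combination kills the $\varepsilon_{ij}$ and forces the $\mathcal{B}$-coefficients to vanish (as $\mathcal{B}$ is a basis), after which the $\varepsilon_{ij}$-coefficients vanish too. Thus $\mathcal{B}\cup\{\varepsilon_{ij}\}$ is a $\mathbb{Z}$-basis of $\mathcal{R}_\beta$.

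It remains to verify that this basis is triangular in the precise sense of \cite[Definition 3.1]{issac}, and this is where the structure is favorable: the vectors of $\mathcal{B}$ are supported on the first $n$ coordinates and, being triangular for $\mathcal{R}_{\bar\beta}$, already have their pivots staggered among indices $\leq n$, while each $\varepsilon_{ij}$ has its pivot at a fresh coordinate $>n$, and these fresh pivots occur at strictly increasing positions as $(i,j)$ runs through the prescribed order. Appending vectors whose pivots lie in new, increasing coordinate slots preserves the echelon shape, so the union again meets the definition. The main point to be careful about is matching the bookkeeping of \cite[Definition 3.1]{issac} exactly---its pivot convention, any sign or normalization conditions it imposes, and the requirement that no pivot coordinate be reused---since the whole force of the statement is that the concatenation respects that specific format; the lattice-theoretic content above is otherwise routine.
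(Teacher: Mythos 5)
Your proof is correct and supplies exactly what the paper leaves implicit: the paper gives no argument at all for this proposition, asserting only that it ``follows from \cite[Definition 3.1]{issac} directly,'' and your collapsing-homomorphism argument (identifying $\mathcal{R}_\beta=\pi^{-1}(\mathcal{R}_{\bar\beta})$, checking the $\varepsilon_{ij}$ form a basis of $\ker\pi$, and splicing via the zero-padding section) is precisely the routine verification being waved away. The one caveat you rightly flag---matching the pivot convention of \cite[Definition 3.1]{issac}---is resolved in your favor by the paper's own example, where the triangular basis $\big\{(2,2,0,0)^T,(-1,0,1,0)^T,(0,-1,0,1)^T\big\}$ has strictly increasing \emph{trailing} nonzero coordinates, which is exactly the echelon shape your concatenation produces (vectors of $\mathcal{B}$ with trailing entries at positions $\leq n$, followed by the $\varepsilon_{ij}$ whose trailing entries occupy positions $n+1,\ldots,n+\sum_s\ell_s$ in increasing order).
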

%\begin{proof}
%This is directly by Definition 3.1 of \cite{issac}.
%\end{proof}

\begin{example}
Set $g=x^2-3x-1$ and $f=g^2$, with roots $(\beta_1,\beta_2)^T$ and $(\beta_1,\beta_2,\beta_1,\beta_2)^T$ respectively. Then a triangular basis of $\mathcal{R}_g$ is $\big\{(2,2)^T\big\}$ while a triangular basis of $\mathcal{R}_f$ is given by $\big\{(2,2,0,0)^T,(-1,0,1,0)^T,(0,-1,0,1)^T\big\}$.
\end{example}

The problem is reduced to computing $\mathcal{R}_g$ for an irreducible  polynomial $g$ so that either of the following conditions holds: (i) all the roots of $g$ are roots of rational; (ii) none of the roots of $g$ is a root of rational and $g_{[2]}$ is irreducible.  We use the techniques developed in \cite[$\S\,2.1,\S\,2.2.1,\S\,3.2$]{issac}, which are parts of the main algorithm {\tt GetBasis} therein, to deal with the former case. From the numerical results in \cite[Table 2]{issac} we see that this case can be handled extremely efficiently. For the latter case, Theorem \ref{thm:main} and Proposition \ref{tribas} applies and $\mathcal{R}_g$ can be obtained directly after proving the irreducibility of $g_{[2]}$  by computing $\deg(\beta_1\beta_2)$ for any two roots $\beta_1,\beta_2$ of $g$. This last step can also be done efficiently by standard methods developed in the filed of computational algebraic number theory.

\subsection{The Set $E$ Is Generic}

For a polynomial $f\in\mathbb{Z}[x]$ of degree at most $n$, we denote by $c_{f,i}$ the coefficient of $f$ with respect to the term $x^i$, $i=0,1,\ldots,n$. Then the \emph{height} of $f$ is defined by \[h(f)=\max_{0\leq i\leq n}|c_{f,i}|.\] Define
$
\mathbb{Z}_{H,n}[x]=\big\{f\in\mathbb{Z}[x]\;|\;h(f)\leq H, \deg(f)\leq n\big\}
$, then the set
\[E_{H,n}=E\cap\mathbb{Z}_{H,n}[x]\]
is called a \emph{truncation} of the set $E$. Noting that $\mathbb{Z}_{H,n}[x]$ is a finite set of cardinality $(2H+1)^{n+1}$, we can equip it with the probability measure $\mathscr{P}_{H,n}$ determined by the discrete uniform distribution on it.

Suppose that $D$ is a subset of the set $\{0,1,\ldots,n\}$. If $D\neq\emptyset$, we denote by $\mathbb{Z}^D$ the set of those vectors with integer coordinates indexed by $D$. If $D=\emptyset$, we set $\mathbb{Z}^D$ to be $\{\,\vdash\}$, a set containing only a special symbol ``$\,\vdash$''. For any $v\in\mathbb{Z}^D$, we define $\mathbb{Z}_{H,n,D,v}[x]=\mathbb{Z}_{H,n}[x]$ if $D=\emptyset$ and $v=\;\vdash$, while setting
\[
\mathbb{Z}_{H,n,D,v}[x]=\big\{f\in\mathbb{Z}_{H,n}[x]\;|\;c_{f,i}=v(i),\forall i\in D\big\}
\]when $D\neq\emptyset$ and $v$ is a vector in $\mathbb{Z}^D$. The set $\mathbb{Z}_{H,n,D,v}[x]$ consists of the polynomials in $\mathbb{Z}_{H,n}[x]$ whose coefficients indexed by the set $D$ are equal to the corresponding integer coordinates of the vector $v$. Again we can equip the finite set $\mathbb{Z}_{H,n,D,v}[x]$ with the probability measure $\mathscr{P}_{H,n,D,v}$ determined by the discrete uniform distribution on it. We define $\hat{D}_{n,v}=\{0,1,\ldots,n\}$ if $D=\emptyset$ and $v=\;\vdash$, while setting \[\hat{D}_{n,v}=\big\{0,1,\ldots,n\big\}\big\backslash\big\{i\in D\;|\;v(i)=0\big\}\] when $D\neq\emptyset$ and $v$ is a vector in $\mathbb{Z}^D$. One observes that GCD$(\hat{D}_{n,v})>1$ iff $\mathbb{Z}_{H,n,D,v}[x]\subset\mathbb{Z}[x^r]$ for some integer $r>1$. Setting $E_{H,n,D,v}=E\cap\mathbb{Z}_{H,n,D,v}[x]$, we have the following result:
\begin{proposition}\label{truncation}
For any integer $n{\geq2}$, any subset $D\subset\{0,1,\ldots,n\}$ with $0\leq|D|\leq n-1$ and any $v\in\mathbb{Z}^D$ such that $\{0,n\}\subset\hat{D}_{n,v}$ and $\emph{GCD}(\hat{D}_{n,v})=1$, the following equality holds
 \[
 \lim\limits_{H\to\infty}\mathscr{P}_{H,n,D,v}(E_{H,n,D,v})=1.
 \]
\end{proposition}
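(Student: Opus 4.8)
The plan is to reduce membership in $E$ to a single generic condition on the Galois group and then to establish that condition by computing the Galois group of the ``generic'' polynomial of the prescribed shape and invoking Hilbert's Irreducibility Theorem. First I would record a clean sufficient condition for $f\in E$: if $\deg(f)=n$, $f(0)\neq0$, $f$ is irreducible over $\mathbb{Q}$ and $G_f\cong S_n$, then $f\in E$. Indeed, writing $c=\mathrm{lc}(f)$ and $g=f/c$, the polynomial $g$ is monic irreducible with $g(0)\neq0$, so $f=cg^{1}$ with $x\nmid g$ and Condition $(i)$ of Definition \ref{def:easy} holds. For Condition $(ii)$, by \cite[Proposition 5.2]{ror} either every root of $g$ is a root of rational (first disjunct, done) or none is; in the latter case $G_g=G_f\cong S_n$ is $2$-transitive, hence $2$-homogeneous, so Proposition \ref{2hiff} shows $g_{[2]}$ is irreducible (second disjunct). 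Thus it suffices to prove that the event
\[
\mathcal{G}_H=\big\{f\in\mathbb{Z}_{H,n,D,v}[x]\;|\;\deg(f)=n,\ f(0)\neq0,\ f\text{ irreducible},\ G_f\cong S_n\big\}
\]
satisfies $\mathscr{P}_{H,n,D,v}(\mathcal{G}_H)\to1$.

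For $H$ larger than $\max_{i\in D}|v(i)|$, the fixed coefficients automatically meet the height bound, so sending the free coefficients $(c_{f,i})_{i\notin D}$ to $f$ is a measure-preserving bijection from the integer box $\{t\in\mathbb{Z}^m:\|t\|_\infty\le H\}$, where $m=n+1-|D|\ge2$, onto $\mathbb{Z}_{H,n,D,v}[x]$. Two of the defining conditions of $\mathcal{G}_H$ are easy to control. If $n\notin D$ the leading coefficient is free and vanishes with probability $(2H+1)^{-1}\to0$, while if $n\in D$ then $v(n)\neq0$ (because $n\in\hat{D}_{n,v}$) and $\deg(f)=n$ always; likewise $f(0)=0$ has probability $\to0$, or never occurs, precisely because $0\in\hat{D}_{n,v}$. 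Hence the whole difficulty is concentrated in the event $G_f\cong S_n$ among the full-degree members.

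To handle this I would pass to the generic polynomial $F(x)=\sum_{i\notin D}T_i x^i+\sum_{i\in D}v(i)x^i\in K[x]$, with $K=\mathbb{Q}\big((T_i)_{i\notin D}\big)$, and prove $\mathrm{Gal}(F/K)=S_n$. Transitivity, i.e.\ irreducibility of $F$ over $K$, follows from a Gauss-lemma argument: picking a free index $j$, the polynomial $F$ is linear in $T_j$ with coefficients $x^j$ and $F-T_jx^j$, and their $\gcd$ is $1$ because $\{0,n\}\subset\hat{D}_{n,v}$ forces $F-T_jx^j$ to have a nonzero constant term when $j\ge1$ (so it is coprime to $x^j$) and leaves $\gcd$ with the constant $1$ when $j=0$; hence $F$ is irreducible over $K$. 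For the full group I would use monodromy. Specializing all free coefficients but one, say the constant term $T_0$ (the case $0\in D$ being treated symmetrically by moving a free $x^j$-coefficient and passing to the degree-$n$ rational map $x\mapsto-(F(x)-c_jx^j)/x^j$), reduces $F$ to $p(x)+T_0$ for a univariate $p$ of degree $n$ with exponents in $\hat{D}_{n,v}$. The geometric Galois group of $p(x)+T_0$ over $\overline{\mathbb{Q}}(T_0)$ is the monodromy group of the branched cover $p\colon\mathbb{P}^1\to\mathbb{P}^1$, and this group equals $S_n$ whenever $p$ is a Morse polynomial, meaning its $n-1$ critical points are simple and its critical values distinct. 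The hypotheses are exactly what make a Morse $p$ available inside the allowed support: for instance $p=x^n+cx$ (available once $\{0,1,n\}\subset\hat{D}_{n,v}$) already has distinct critical values, whereas $\gcd(\hat{D}_{n,v})=1$ excludes the degenerate case $p\in\mathbb{Q}[x^r]$, whose monodromy is imprimitive. By the specialization inequality the geometric Galois group of $F$ over $\overline{\mathbb{Q}}\big((T_i)_{i\notin D}\big)$ then contains $S_n$; being normal in the arithmetic group $\mathrm{Gal}(F/K)\le S_n$, it forces $\mathrm{Gal}(F/K)=S_n$.

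With $\mathrm{Gal}(F/K)=S_n$ in hand, Hilbert's Irreducibility Theorem shows that the integer specializations $t\in\mathbb{Z}^m$ for which $f_t$ fails to have Galois group $S_n$ form a thin set, and the quantitative theory of thin sets (Cohen, Serre) bounds the number of such $t$ in a box of side $2H$ by $O\big(H^{m-1/2}\log H\big)=o(H^m)$; dividing by $(2H+1)^m$ gives probability $\to0$. Combined with the two easy events of the second step, this yields $\mathscr{P}_{H,n,D,v}(\mathcal{G}_H)\to1$, hence $\mathscr{P}_{H,n,D,v}(E_{H,n,D,v})\to1$. The main obstacle is the third step: proving that the constrained generic polynomial genuinely has Galois group $S_n$ using only the two combinatorial hypotheses on $\hat{D}_{n,v}$. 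With possibly as few as two free coefficients and an arbitrary lacunary support, one must guarantee that a Morse specialization (or, in the pencil version, a specialization with only simple Lefschetz-type degenerations and no coincident critical values) actually exists inside the frozen-coefficient family, and that no imprimitive or proper primitive subgroup of $S_n$ can intervene; this is precisely where $\{0,n\}\subset\hat{D}_{n,v}$ and $\gcd(\hat{D}_{n,v})=1$ must be used in full force.
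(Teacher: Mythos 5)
Your reduction step is exactly the paper's: the paper likewise shows that every $f\in\mathbb{Z}_{H,n,D,v}[x]$ with $G_f\cong S_n$ lies in $E$ (irreducibility, $\deg f=n$ and $x\nmid f$ all follow from $G_f\cong S_n$, since any permutation group of degree less than $n$, or intransitive of degree $n$, has order smaller than $n!$; then Definition \ref{def:easy}(ii) follows from Proposition \ref{2hiff} because $S_n$ is $2$-homogeneous), so it remains only to show that $G_f\cong S_n$ holds with probability tending to $1$. The difference is how that statement is obtained. The paper simply invokes the quantitative result \cite[Theorem 1]{sym} (with $K=k=\mathbb{Q}$, $r=1$, the free coefficients as the parameters $\mathbf{t}$, $s=n+1-|D|$), whose hypotheses are precisely $\{0,n\}\subset\hat{D}_{n,v}$ and $\text{GCD}(\hat{D}_{n,v})=1$, and which directly yields $|T_{H,n,D,v}|\leq c(n)H^{n+\frac{1}{2}-|D|}\log H$; dividing by $(2H+1)^{n+1-|D|}$ finishes the proof. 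You instead propose to re-derive such a bound from scratch: compute the Galois group of the constrained generic polynomial $F$ over $K=\mathbb{Q}\big((T_i)_{i\notin D}\big)$, then apply quantitative Hilbert irreducibility (Cohen/Serre thin-set counting).

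The genuine gap is the step you yourself flag as ``the main obstacle'': you never prove $\mathrm{Gal}(F/K)=S_n$ using only the hypotheses $\{0,n\}\subset\hat{D}_{n,v}$ and $\text{GCD}(\hat{D}_{n,v})=1$. Your Morse-polynomial witness $p=x^n+cx$ needs $1\in\hat{D}_{n,v}$, which these hypotheses do not imply: take $D=\{1\}$, $v(1)=0$, so that $\hat{D}_{n,v}=\{0,2,3,\ldots,n\}$ satisfies both conditions yet excludes your witness. In general, with up to $n-1$ frozen coefficients the family can have as few as two free parameters and an essentially arbitrary lacunary support, and exhibiting inside it a one-parameter specialization with geometric monodromy $S_n$ (simple critical points, distinct critical values, and no imprimitive or proper primitive group intervening) is exactly the nontrivial content of the theorem the paper cites; filling this in would amount to reproving \cite[Theorem 1]{sym}. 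The surrounding pieces of your argument are sound — the inclusion into $E$, the irreducibility of $F$ over $K$ via Gauss's lemma, the specialization inequality, the thin-set count $O\big(H^{m-1/2}\log H\big)$, and the easy events $\deg f=n$ and $f(0)\neq0$ — but without the generic-$S_n$ computation the proof does not close.
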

\begin{proof}
Denote by $S_n$ the symmetric group of order $n$. Then, by \cite[Theorem 1]{sym} (let $K=k=\mathbb{Q}, r=1, \mathbf{t}=
\{\alpha_i\;|\;i\not\in D\}$ and $s=n+1-|D|$ therein), we know that there is a positive number $c(n)$ depending only on $n$ such that
\[
\big|\{f\in\mathbb{Z}_{H,n,D,v}[x]\;|\;G_f\not\cong S_n\}\big|\leq c(n)H^{n+\frac{1}{2}-|D|}\log H
\]
for all integers $H\geq 1$. Set $T_{H,n,D,v}=\{f\in\mathbb{Z}_{H,n,D,v}[x]\;|\;G_f\not\cong S_n\}$, then
%\[
%\begin{array}{lcl}
%\limsup\limits_{H\to\infty}\mathscr{P}_{H,n,D,v}(T_{H,n,D,v})&=&\limsup\limits_{H\to\infty}\frac{|T_{H,n,D,v}|}{(2H+1)^{n+1-|D|}}\\
%&\leq&\limsup\limits_{H\to\infty}\frac{c(n)H^{n+\frac{1}{2}-|D|}\log H}{(2H+1)^{n+1-|D|}}\\
%&=&0.
%\end{array}
%\]
\[
\limsup\limits_{H\to\infty}\mathscr{P}_{H,n,D,v}(T_{H,n,D,v})=\limsup\limits_{H\to\infty}\frac{|T_{H,n,D,v}|}{(2H+1)^{n+1-|D|}}\leq\limsup\limits_{H\to\infty}\frac{c(n)H^{n+\frac{1}{2}-|D|}\log H}{(2H+1)^{n+1-|D|}}=0.
\]
Define $S_{H,n,D,v}=\mathbb{Z}_{H,n,D,v}[x]{\big\backslash }T_{H,n,D,v}$, then
\[
\lim\limits_{H\to\infty}\mathscr{P}_{H,n,D,v}(S_{H,n,D,v})=1.
\]
Now it is sufficient to prove that $E_{H,n,D,v}\supset S_{H,n,D,v}$. Set $f\in S_{H,n,D,v}$,  then $G_f\cong S_n$, $\deg(f)=n$ and $f$ is irreducible. Since $n\geq2$, $x{\not|}\,f(x)$. Thus Definition \ref{def:easy} (i) is satisfied. Note that either every root of $f$ is a root of rational or none of its roots is a root of rational. In the latter case, noting that $n\geq 2$ and $G_f\cong S_n$ is $2$-homogeneous, we conclude that $f_{[2]}$ is irreducible from Proposition \ref{2hiff}. By now we have proven that $f\in E_{H,n,D,v}$. Thus $E_{H,n,D,v}\supset S_{H,n,D,v}$.
\end{proof}

When $D=\emptyset$ and $v=\;\vdash$, Proposition \ref{truncation} gives:
\begin{corollary}\label{realtrunc}
For any integer $n\geq2$, $\lim\limits_{H\to\infty}\mathscr{P}_{H,n}(E_{H,n})=1$.
\end{corollary}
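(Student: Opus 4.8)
The plan is to obtain the corollary as the ``no-constraint'' specialization of Proposition \ref{truncation}, taking $D=\emptyset$ and $v=\;\vdash$. First I would check that this choice satisfies every hypothesis of the proposition. The cardinality bound $0\leq|D|\leq n-1$ holds because $|D|=0$ while $n\geq2$. By the convention introduced just before the proposition, $\hat{D}_{n,v}=\{0,1,\ldots,n\}$ whenever $D=\emptyset$ and $v=\;\vdash$; consequently the containment $\{0,n\}\subset\hat{D}_{n,v}$ is immediate, and $\text{GCD}(\hat{D}_{n,v})=1$ follows at once since $1\in\{0,1,\ldots,n\}$. Thus all the arithmetic side-conditions of Proposition \ref{truncation} are met in this degenerate case.

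Next I would verify that the objects appearing in the proposition reduce to those in the corollary. By definition $\mathbb{Z}_{H,n,D,v}[x]=\mathbb{Z}_{H,n}[x]$ in this case, so the associated discrete-uniform measures coincide, $\mathscr{P}_{H,n,D,v}=\mathscr{P}_{H,n}$, and the truncated event collapses: $E_{H,n,D,v}=E\cap\mathbb{Z}_{H,n,D,v}[x]=E\cap\mathbb{Z}_{H,n}[x]=E_{H,n}$. With these identifications in hand, the conclusion $\lim\limits_{H\to\infty}\mathscr{P}_{H,n,D,v}(E_{H,n,D,v})=1$ supplied by the proposition becomes verbatim the assertion $\lim\limits_{H\to\infty}\mathscr{P}_{H,n}(E_{H,n})=1$ that the corollary demands.

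I do not expect a genuine obstacle here: the corollary is a pure instantiation, so the only work is the bookkeeping needed to confirm that the empty-index conventions ($D=\emptyset$, $v=\;\vdash$) are admissible and that they trivialize both the coprimality hypothesis and the containment hypothesis. All the substantive content---bounding the proportion of non-$S_n$ polynomials via \cite[Theorem 1]{sym}, and showing that $S_n$-polynomials automatically lie in $E$ through Proposition \ref{2hiff}---already resides inside the proof of Proposition \ref{truncation}, and nothing further is required.
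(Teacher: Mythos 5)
Your proposal is correct and is exactly the paper's own argument: the paper derives Corollary \ref{realtrunc} by specializing Proposition \ref{truncation} to $D=\emptyset$, $v=\;\vdash$, under which $\mathbb{Z}_{H,n,D,v}[x]=\mathbb{Z}_{H,n}[x]$, $\mathscr{P}_{H,n,D,v}=\mathscr{P}_{H,n}$ and $E_{H,n,D,v}=E_{H,n}$. Your extra bookkeeping (checking $|D|=0\leq n-1$, $\{0,n\}\subset\hat{D}_{n,v}$, and $\mathrm{GCD}(\hat{D}_{n,v})=1$ since $1\in\{0,\ldots,n\}$) is precisely the verification the paper leaves implicit.
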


%From that, we know that for any degree bound $n\geq2$ and any $\epsilon>0$, if we consider the set of polynomials $\mathbb{Z}_{H,n}[x]$ for any $H$ large enough, then
%\[
%\mathscr{P}_{H,n}(E_{H,n})>1-\epsilon.
%\]
%That is, a large proportion (arbitrarily close to 100\% as $H\to\infty$) of the polynomials in $\mathbb{Z}_{H,n}[x]$ are in the set $E$ thus they can be handled efficiently. We say a polynomial in $E$ is \emph{generic}.

Defining $\bar{\mathbb{Z}}_{H,n,D,v}[x]=\{f\in{\mathbb{Z}}_{H,n,D,v}[x]\;|\;c_{f,n}\neq0\}$ to be the set of the polynomials in ${\mathbb{Z}}_{H,n,D,v}[x]$ of degree $n$, one observes that \[S_{H,n,D,v}\subset\bar{\mathbb{Z}}_{H,n,D,v}[x].\] Setting $\bar{E}_{H,n,D,v}=E\cap\bar{\mathbb{Z}}_{H,n,D,v}[x]$ and $\bar{S}_{H,n,D,v}=\{f\in\bar{\mathbb{Z}}_{H,n,D,v}[x]\;|\;G_f\cong S_n\}$, one concludes that \[\bar{S}_{H,n,D,v}=S_{H,n,D,v}\cap\bar{\mathbb{Z}}_{H,n,D,v}[x]=S_{H,n,D,v}.\]Denoting by $\bar{\mathscr{P}}_{H,n,D,v}$ the probability measure determined by the discrete uniform distribution on the set $\bar{\mathbb{Z}}_{H,n,D,v}[x]$, we have
\[
\begin{array}{lcl}
\bar{\mathscr{P}}_{H,n,D,v}(\bar{S}_{H,n,D,v})&=&{|\bar{S}_{H,n,D,v}|}\div{\big|\bar{\mathbb{Z}}_{H,n,D,v}[x]\big|}\\
&\geq&{|{S}_{H,n,D,v}|}\div{\big|{\mathbb{Z}}_{H,n,D,v}[x]\big|}\\
&=&\mathscr{P}_{H,n,D,v}(S_{H,n,D,v}).
\end{array}
\]Then the following corollary holds:
\begin{corollary}\label{degn}
For any integer $n{\geq2}$, any subset $D\subset\{0,1,\ldots,n\}$ with $0\leq|D|\leq n-1$ and any $v\in\mathbb{Z}^D$ such that $\{0,n\}\subset\hat{D}_{n,v}$ and $\emph{GCD}(\hat{D}_{n,v})=1$, the following equality holds
 \[
 \lim\limits_{H\to\infty}\bar{\mathscr{P}}_{H,n,D,v}(\bar{E}_{H,n,D,v})=1.
 \]
\end{corollary}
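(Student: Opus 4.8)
The plan is to deduce this corollary almost entirely from the work already done in the proof of Proposition \ref{truncation}, transferring the genericity statement from the full sample space $\mathbb{Z}_{H,n,D,v}[x]$ to the restricted space $\bar{\mathbb{Z}}_{H,n,D,v}[x]$ of degree-exactly-$n$ polynomials by means of the inequality displayed immediately above the statement. No new number-theoretic input is required; the substance is simply a monotonicity-of-measure argument combined with a squeeze.

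First I would record the inclusion $\bar{S}_{H,n,D,v}\subset\bar{E}_{H,n,D,v}$. The proof of Proposition \ref{truncation} shows that $S_{H,n,D,v}\subset E_{H,n,D,v}$, i.e. every $f\in S_{H,n,D,v}$ lies in $E$. Since each such $f$ satisfies $G_f\cong S_n$ and hence has $\deg(f)=n$, we have $c_{f,n}\neq0$, so $f\in\bar{\mathbb{Z}}_{H,n,D,v}[x]$; combined with $f\in E$ this gives $f\in\bar{E}_{H,n,D,v}$. Using the identity $\bar{S}_{H,n,D,v}=S_{H,n,D,v}$ established just above the statement, this yields $\bar{S}_{H,n,D,v}\subset\bar{E}_{H,n,D,v}$, and monotonicity of $\bar{\mathscr{P}}_{H,n,D,v}$ then gives $\bar{\mathscr{P}}_{H,n,D,v}(\bar{E}_{H,n,D,v})\geq\bar{\mathscr{P}}_{H,n,D,v}(\bar{S}_{H,n,D,v})$.

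Next I would chain this with the inequality displayed immediately before the statement, namely $\bar{\mathscr{P}}_{H,n,D,v}(\bar{S}_{H,n,D,v})\geq\mathscr{P}_{H,n,D,v}(S_{H,n,D,v})$, to obtain $\bar{\mathscr{P}}_{H,n,D,v}(\bar{E}_{H,n,D,v})\geq\mathscr{P}_{H,n,D,v}(S_{H,n,D,v})$. The proof of Proposition \ref{truncation} already establishes $\lim_{H\to\infty}\mathscr{P}_{H,n,D,v}(S_{H,n,D,v})=1$. Since every probability is at most $1$, the two-sided bound $\mathscr{P}_{H,n,D,v}(S_{H,n,D,v})\leq\bar{\mathscr{P}}_{H,n,D,v}(\bar{E}_{H,n,D,v})\leq1$ forces the desired limit $\lim_{H\to\infty}\bar{\mathscr{P}}_{H,n,D,v}(\bar{E}_{H,n,D,v})=1$.

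I expect essentially no obstacle here, since all the analytic and algebraic content — the count of non-symmetric Galois groups via \cite{sym} and the verification of the two conditions of Definition \ref{def:easy} through Proposition \ref{2hiff} — has already been carried out in Proposition \ref{truncation}. The only point deserving care is that restricting the sample space to $\bar{\mathbb{Z}}_{H,n,D,v}[x]$ cannot decrease the relevant probability; this is exactly what the displayed inequality encodes, and it holds because passing from $\mathbb{Z}_{H,n,D,v}[x]$ to $\bar{\mathbb{Z}}_{H,n,D,v}[x]$ discards only polynomials of degree strictly less than $n$, all of which lie outside the ``good'' set $S_{H,n,D,v}=\bar{S}_{H,n,D,v}$, so removing them shrinks the denominator while leaving the numerator unchanged and thereby raises the ratio.
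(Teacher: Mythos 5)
Your proposal is correct and follows essentially the same route as the paper: both rest on the inclusion $\bar{S}_{H,n,D,v}\subset\bar{E}_{H,n,D,v}$ (inherited from $S_{H,n,D,v}\subset E_{H,n,D,v}$ in Proposition \ref{truncation} together with $\bar{S}_{H,n,D,v}=S_{H,n,D,v}$), the displayed inequality $\bar{\mathscr{P}}_{H,n,D,v}(\bar{S}_{H,n,D,v})\geq\mathscr{P}_{H,n,D,v}(S_{H,n,D,v})$, and the limit $\lim_{H\to\infty}\mathscr{P}_{H,n,D,v}(S_{H,n,D,v})=1$. The paper phrases the conclusion via $\liminf$ while you phrase it as a squeeze against the bound $1$; these are the same argument.
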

\begin{proof}
As in the proof of Proposition \ref{truncation}, we have
\[
\bar{E}_{H,n,D,v}\supset\bar{S}_{H,n,D,v}\text{ and }\lim\limits_{H\to\infty}\mathscr{P}_{H,n,D,v}(S_{H,n,D,v})=1.
\]
Thus
\[
\begin{array}{lcl}
 \liminf\limits_{H\to\infty}\bar{\mathscr{P}}_{H,n,D,v}(\bar{E}_{H,n,D,v})&\geq& \liminf\limits_{H\to\infty}\bar{\mathscr{P}}_{H,n,D,v}(\bar{S}_{H,n,D,v})\\
 &\geq&\liminf\limits_{H\to\infty}\mathscr{P}_{H,n,D,v}(S_{H,n,D,v})\\
&=&1.
\end{array}
\]
\end{proof}

Setting $D=\emptyset$, $v=\;\vdash$, $\bar{\mathbb{Z}}_{H,n}[x]=\bar{\mathbb{Z}}_{H,n,\emptyset,\vdash}[x]=\{f\in{\mathbb{Z}}_{H,n}[x]\;|\;c_{f,n}\neq0\}$, $\bar{E}_{H,n}=\bar{E}_{H,n,\emptyset,\vdash}=E\cap\bar{\mathbb{Z}}_{H,n}[x]$ and $\bar{\mathscr{P}}_{H,n}=\bar{\mathscr{P}}_{H,n,\emptyset,\vdash}$, we have
\begin{corollary}\label{degnrealtrunc}
For any integer $n\geq2$, $\lim\limits_{H\to\infty}\bar{\mathscr{P}}_{H,n}(\bar{E}_{H,n})=1$.
\end{corollary}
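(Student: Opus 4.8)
The plan is to derive this corollary as the special instance of Corollary \ref{degn} obtained by taking $D=\emptyset$ and $v=\;\vdash$. Under these choices the notation set up immediately before the statement identifies $\bar{\mathbb{Z}}_{H,n}[x]$, $\bar{E}_{H,n}$ and $\bar{\mathscr{P}}_{H,n}$ with $\bar{\mathbb{Z}}_{H,n,\emptyset,\vdash}[x]$, $\bar{E}_{H,n,\emptyset,\vdash}$ and $\bar{\mathscr{P}}_{H,n,\emptyset,\vdash}$ respectively, so the limit asserted here is literally the conclusion of Corollary \ref{degn} specialized to this data. Hence all that remains is to confirm that the hypotheses of Corollary \ref{degn} are met for $D=\emptyset$ and $v=\;\vdash$.

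First I would check the size constraint on $D$: the requirement $0\leq|D|\leq n-1$ reads $0\leq 0\leq n-1$, which holds because $n\geq2$. Next I would unwind the definition of $\hat{D}_{n,v}$ in the boundary case $D=\emptyset$, $v=\;\vdash$, where by the convention introduced earlier $\hat{D}_{n,v}=\{0,1,\ldots,n\}$. From this the two remaining conditions are immediate: one has $\{0,n\}\subset\{0,1,\ldots,n\}=\hat{D}_{n,v}$, and $\mathrm{GCD}(\hat{D}_{n,v})=1$ since the set $\{0,1,\ldots,n\}$ contains the integer $1$.

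With the hypotheses verified, Corollary \ref{degn} yields $\lim_{H\to\infty}\bar{\mathscr{P}}_{H,n,\emptyset,\vdash}(\bar{E}_{H,n,\emptyset,\vdash})=1$, which by the identifications above is exactly the claimed equality. I expect no genuine obstacle in this argument; the only point requiring attention is the correct reading of the degenerate definitions of $\mathbb{Z}^D$, $\hat{D}_{n,v}$ and the associated uniform measures when $D$ is empty, and these were arranged explicitly in the preceding paragraphs precisely so that the specialization passes through with no further estimate needed.
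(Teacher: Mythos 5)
Your proposal is correct and matches the paper's own (implicit) proof exactly: the paper obtains Corollary~\ref{degnrealtrunc} precisely by setting $D=\emptyset$, $v=\;\vdash$ in Corollary~\ref{degn} and invoking the notational identifications $\bar{\mathbb{Z}}_{H,n}[x]=\bar{\mathbb{Z}}_{H,n,\emptyset,\vdash}[x]$, $\bar{E}_{H,n}=\bar{E}_{H,n,\emptyset,\vdash}$, $\bar{\mathscr{P}}_{H,n}=\bar{\mathscr{P}}_{H,n,\emptyset,\vdash}$. Your explicit verification of the hypotheses ($|D|=0\leq n-1$, $\hat{D}_{n,v}=\{0,1,\ldots,n\}\supset\{0,n\}$, and $\mathrm{GCD}(\hat{D}_{n,v})=1$ since $1\in\hat{D}_{n,v}$) is exactly the routine check the paper leaves to the reader.
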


We say the polynomials in the subset $E\subset\mathbb{Q}[x]$ are \emph{generic} in the sense that Proposition \ref{truncation} and Corollary \ref{realtrunc}--\ref{degnrealtrunc} hold.
\section{Algorithm and Numerical Results}\label{algexp}
In this section, we first summarize $\S\,\ref{Edef}$ to obtain Algorithm \ref{fastbasis} efficiently computing a basis of $\mathcal{R}_f$ for any $f\in E$. Then by showing some randomly generated examples, we point out the drawback of directly applying the state-of-the-art algorithms, which aim to compute the exponent lattice of general non-zero algebraic numbers, to the inputs which are of the Galois case. Finally, we show the superiority of Algorithm \ref{fastbasis} dealing with randomly generated polynomials by a great deal of examples.

Algorithm \ref{fastbasis} is implemented with Mathematica. The numerical results in this section are all obtained on a laptop of  WINDOWS 7 SYSTEM with 4GB RAM and a 2.53GHz Intel Core i3 processor with 4 cores.
\subsection{The Algorithm for the Roots of a Generic Polynomial}
According to $\S\,\ref{Edef}$, we design Algorithm \ref{fastbasis} to compute an exponent lattice basis of the roots of any polynomial $f\in E$. The algorithm returns an ``\textcolor{red}{F}'' if $f\not\in E$. Thus this is not a complete algorithm computing an exponent lattice basis of the roots of an arbitrary polynomial.

\begin{remark}\label{complete}
If we use either one of the algorithms {\tt FindRelations} and {\tt GetBasis} to compute $\mathcal{R}_f$ directly whenever an ``\textcolor{red}{F}'' is obtained in Algorithm \ref{fastbasis},  then it can be modified to be a complete one.
\end{remark}
\begin{algorithm}
	\caption{FastBasis}\label{fastbasis}
	\begin{algorithmic}[1]
	\REQUIRE A polynomial $f\in\mathbb{Z}[x]$.
	\ENSURE A basis of $\mathcal{R}_f$ if $f\in E$, ``\textcolor{red}{F}'' if otherwise.
	\STATE\textbf{if} \big($f$ has at least two co-prime irreducible factors or $x|f(x)$\big) \textbf{then }\{\textbf{return }\textcolor{red}{F}\}\textbf{ end if};
%		\STATE\textbf{if} ($x$ divides $f(x)$) \textbf{then }\{\textbf{return }\textcolor{red}{F}\}\textbf{ end if};
	\STATE Suppose that the only irreducible factor of $f$ is $g$ (i.e., $f=cg^k$, $c\in \mathbb{Q}^*$, $k\geq1$, $g\in\mathbb{Q}[x]$);
	 \IF {(all the roots of $g$ are roots of rational)}
	\STATE Compute a basis of $\mathcal{R}_g$ by \cite{issac} {\tt GetBasis} and a basis of $\mathcal{R}_f$ by Proposition \ref{multi};
	\ELSE
	\STATE Suppose that $\beta_1$ and $\beta_2$ are any two roots of $g$ and $d=\deg\big({\tt MinimalPolynomial}(\beta_1\beta_2)\big)$;
	
	\STATE\textbf{if} $(d<\deg(g)(\deg(g)-1)/2)$ \textbf{then }\{\textbf{return }\textcolor{red}{F}\}\textbf{ end if};
	\STATE Compute a basis of $\mathcal{R}_g$ from Proposition \ref{tribas} and a basis of $\mathcal{R}_f$;
	\ENDIF
	\STATE\textbf{return }the basis of $\mathcal{R}_f$
	\end{algorithmic}
\end{algorithm}
\subsection{The Bottleneck of  The Existing Algorithms}\label{subsection:bottleneck}
In this subsection, we will see that the performance of the algorithms {\tt FindRelations} and {\tt GetBasis} on randomly generated inputs, which are of the Galois case, is not very satisfactory.
%The following table compares the algorithm {\tt FastBasis} with {\tt FindRelations} and {\tt GetBasis} by applying them to some randomly generated polynomials.

By applying the algorithm {\tt FastBasis} to a randomly generated polynomial $f$ and by applying the algorithms {\tt FindRelations} and {\tt GetBasis} to its roots, we compare these three algorithms and show the results in Table \ref{compare}. The polynomials $f^{(i)}$ are randomly picked from the set $\mathbb{Z}_{10,4}[x]$ while $g^{(i)}$ and $h^{(i)}$ are from the sets $\mathbb{Z}_{10,5}[x]$ and $\mathbb{Z}_{10,9}[x]$ respectively. The acronym ``OT'' means that the algorithm does not return an answer within two hours.

\begin{table}[H]\label{compare}
    \centering
\caption{Comparing the State-of-the-Art Algorithms with {\tt FastBasis}}
\begin{tabular}{|c|c|c|c|c|c|}
 \hline
 \multirow{2}{*}{class} &\multirow{2}{*}{example} &\multicolumn{3}{c|}{runtime (s)}\\% &\multicolumn{2}{c|}{\multirow{2}{*}{Multi-Row and Col}} \\
 \cline{3-5}
 && {\tt FindRelations}&{\tt GetBasis}  & {\tt FastBasis}\\%&\multicolumn{2}{c|}{} \\
 \hline
 \multirow{3}{*}{\shortstack{$n=4$\\\\$H=10$}} & $f^{(1)}$  &52.4249&133.851 & 0.00664\\
 \cline{2-5}
&$f^{(2)}$ & 34.0127 &90.3257&0.00652 \\
   \cline{2-5}
  & $f^{(3)}$& 54.9171&144.214 & 0.00746\\
   \hline
 \multirow{3}{*}{\shortstack{$n=5$\\\\$H=10$}} & $g^{(1)}$ & OT&OT & 0.01056\\
 \cline{2-5}
& $g^{(2)}$ & OT &OT& 0.01009\\
   \cline{2-5}
  &$g^{(3)}$ & OT &OT& 0.00901\\
   \hline
 \multirow{3}{*}{\shortstack{$n=9$\\\\$H=10$}} & $h^{(1)}$ & OT&OT & 0.04895\\
 \cline{2-5}
& $h^{(2)}$ & OT &OT& 0.04500\\
   \cline{2-5}
  & $h^{(3)}$ & OT &OT& 0.05065\\
% \hline
% label-1 & label-1 & label-2 & label-3\\ %& label-4 & label-5 \\
 \hline
 \end{tabular}
 \end{table}
From the table we see that both algorithms ({\tt FindRelations} and {\tt GetBasis}) become less efficient when the degree bound $n$ becomes slightly larger. In contrast, for every example in the table, the algorithm {\tt FastBasis} returns an answer successfully in a short time. This suggests that the special techniques developed for the Galois case are effective and promising.

In the next subsection, we will see that the algorithm {\tt FastBasis} does not always return an answer successfully. Fortunately, it does success most of the time, as indicated by Corollary \ref{realtrunc} . Moreover, the average runtime of those success examples in each class is, to some extent, satisfactory.
\subsection{The Superiority of the New Approach}\label{sup}
We test  the algorithm {\tt FastBasis} by a large number of randomly generated polynomials from different classes of the form $\mathbb{Z}_{H,n}[x]$. The results are shown in Table \ref{FBresults}.
 \newpage
The notation ``$\#$example'' denotes the number of the examples that are generated in a single class, while ``$\#$success'' denotes the number of those examples among them for which the algorithm returns a lattice basis successfully within two hours. The notation ``$\#$F'' stands for the number of the generated polynomials in a class that are proved to be outside the set $E$ within two hours, while the  notation ``$\#$OT'' shows the number of examples for which the algorithm returns no results within two hours.
 \begin{table}[H]\label{FBresults}
    \centering
\caption{Testing {\tt FastBasis} by Random Polynomials}
\begin{tabular}{|c|c||c|c|c|c||c|c|}
\hline
\multicolumn{2}{|c||}{\multirow{2}{*}{class}} &\multirow{2}{*}{\#example}&\multirow{2}{*}{\#success}&\multirow{2}{*}{\#OT}&\multirow{2}{*}{\#F}& \multirow{2}{*}{\shortstack{average runtime (s) \\for success examples}}\\
\multicolumn{2}{|c||}{}&&&&&\\
\hline
 \multirow{3}{*}{$n=6$} & $H=10$  &10000&8941&0&1059&0.0114477\\
 \cline{2-7}
&$H=20$ & 10000&9470&0&530&0.0127577\\
   \cline{2-7}
  & $H=50$&10000&9785&0&215&0.0139599\\
  \hline
   \hline
   \multirow{3}{*}{$n=8$} & $H=10$  &10000&9045&0&955&0.0287412\\
 \cline{2-7}
&$H=20$ & 10000 &9557&0&443&0.0326949\\
   \cline{2-7}
  & $H=50$& 10000&9816&0&184&0.0376648\\
  \hline
   \hline
   \multirow{3}{*}{$n=9$} & $H=10$  &10000&9079&0&921&0.0454840\\
 \cline{2-7}
&$H=20$ &10000&9540&0&460&0.0521535\\
   \cline{2-7}
  & $H=50$&10000&9814&0&186&0.0599880\\
  \hline
   \hline
   \multirow{3}{*}{$n=10$} & $H=10$  &10000&9173&0&827&0.0729700\\
 \cline{2-7}
&$H=20$ & 10000 &9542&0&458&0.0829247\\
   \cline{2-7}
  & $H=50$& 10000&9817&0&183&0.0963495\\
  \hline
   \hline
     \multirow{2}{*}{$n=15$} & $H=10$  &10000&9175&0&825&0.4842220\\
 \cline{2-7}
&$H=50$ & 10000 &9827&0&173&0.6751220\\
  \hline
   \hline
     \multirow{2}{*}{$n=20$} & $H=10$  &10000&9213&0&787&2.44977\\
 \cline{2-7}
&$H=50$ & 10000 &9831&0&169&3.24327\\
  \hline
   \hline
     \multirow{2}{*}{$n=30$} & $H=10$  &10000&9321&0&679&22.2455\\
 \cline{2-7}
&$H=50$ & 10000 &9850&0&150&29.0741\\
  \hline
   \hline
       \multirow{2}{*}{$n=40$} & $H=10$  &100&96&0&4&127.722\\
 \cline{2-7}
&$H=50$ & 100 &100&0&0&157.399\\
  \hline
   \hline
       \multirow{2}{*}{$n=50$} & $H=10$  &100&95&0&5&579.485\\
 \cline{2-7}
&$H=50$ & 100 &99&0&1&673.213\\
  \hline
   \hline
       \multirow{2}{*}{$n=60$} & $H=10$  &35&32&0&3&2421.11\\
 \cline{2-7}
&$H=50$ &35&32&2&1&2561.27\\
  \hline
 \end{tabular}
 \end{table}
From the table, we see that the {\tt FastBasis} algorithm has two advantages: (i) for randomly generated  polynomials in each class, a large proportion of them can be dealt with successfully; (ii) the average runtime of those success examples is short and a large number of  polynomials with much higher degrees, which were intractable before, can now be dealt with.

%the maximal degree of the polynomials that can be handled in two hours is at least $50$, which is much greater than the one of the algorithms {\tt FindRelations} and {\tt GetBasis}.

For the classes of  $n\geq40$, not too many examples are generated. In fact, dealing with ten thousands random polynomials in the class $\mathbb{Z}_{10,40}$ can take about half a month. For the class $\mathbb{Z}_{10,50}$, the corresponding runtime can be more than two months. As for the class $\mathbb{Z}_{10,60}$, the runtime can be as long as nine months. However, from the few examples, we also see that the ratio $(\#$success$\big/\#$example$)$ is high and the average runtime is acceptable.
%The algorithm {\tt FastBasis} is the \textcolor{red}{first} algorithm developed particularly to compute the exponent lattice of the roots of a polynomial in $\mathbb{Q}[x]$.

The numerical results show that the algorithm {\tt FastBasis} can handle quite many randomly generated polynomials and it is efficient enough to solve larger problems with higher polynomial degrees, although it is not a complete algorithm. As indicated by Remark \ref{complete}, it can be modified to be a complete algorithm by combining with the algorithm {\tt FindRelations} or the algorithm {\tt  GetBasis}. Nevertheless, taking into consideration the performance of these two algorithms on the examples of small degree in Table \ref{compare}, one can expect that this completed algorithm would not be too efficient, especially for those polynomials not in the set $E$.
\section{Summary}
In this paper we propose a new sufficient condition for the exponent lattice of the roots of a polynomial in $\mathbb{Q}[x]$ to be trivial, which improves many other ones. Based on this, an algorithm is designed to compute the multiplicative relations between the roots of a  generic polynomial. The numerical results show that this algorithm can deal with a large proportion of the randomly generated  polynomials very efficiently. Moreover, it can handle many polynomials with higher degrees that are intractable by other algorithms.
\section*{Acknowledgments}
The author is very grateful to Professor Bican Xia for his helpful suggestions on the abstraction and the introduction of this paper. The author also thanks him for his advice about restating Proposition \ref{truncation} in a more general manner.
\bibliographystyle{siamplain}
\bibliography{ex_article}
\end{document}